\documentclass[11pt,a4paper,reqno]{amsart}
\usepackage{hyperref}
\usepackage{amssymb, amsmath, amsthm}
\usepackage{bbm}

\numberwithin{equation}{section}

\addtolength{\hoffset}{-1cm} 
\addtolength{\textwidth}{2cm}
\addtolength{\voffset}{-1cm} \addtolength{\textheight}{2cm}

\def\R{\mathbb{R}}
\def\N{\mathbb{Z}_{\ge 0}}
\def\C{\mathbb{C}}
\def\Z{\mathbb{Z}}
                       
\newcommand{\pr}[1]{\mathrm{P}_{#1}}

\setcounter{secnumdepth}{5}

\theoremstyle{plain}
\newtheorem{thm}{Theorem}[section]
\newtheorem{prop}[thm]{Proposition}

\newtheorem{lem}[thm]{Lemma}

\newtheorem{claim}[thm]{Claim}

\newtheorem*{conj*}{Conjecture}
\newtheorem*{openproblem*}{Open Problem}

\theoremstyle{definition}
\newtheorem{defi}[thm]{Definition}

\theoremstyle{remark}

\begin{document}

\title[A polynomial Roth theorem on the real line]{A polynomial Roth theorem on the real line}

\author[P. Durcik]{Polona Durcik}
\address{Polona Durcik, University of Bonn, Endenicher Allee 60, 53115 Bonn, Germany}
\email{durcik@math.uni-bonn.de}

\author[S. Guo]{Shaoming Guo}
\address{Shaoming Guo, Indiana University Bloomington, 831 E Third St, Bloomington, IN 47405, USA}
\address{Current address: Department of Mathematics, the Chinese University of Hong Kong, Hong Kong, China}
\email{shaomingguo2018@gmail.com}

\author[J. Roos]{Joris Roos}
\address{Joris Roos, University of Bonn, Endenicher Allee 60, 53115 Bonn, Germany}
\email{jroos@math.uni-bonn.de}
\subjclass[2010]{05D10, 42B20}

\date{\today}

\begin{abstract}
For a polynomial $P$ of degree greater than one, we show the existence of patterns of the form $(x,x+t,x+P(t))$ with a gap estimate on $t$ in positive density subsets of the reals. This is an extension of an earlier result of Bourgain. Our proof is a combination of Bourgain's approach and more recent methods that were originally developed for the study of the bilinear Hilbert transform along curves.
\end{abstract}

\maketitle

\section{Introduction}

Let $P: \R\to \R$ be a polynomial. We will let $\|P\|$ denote the $\ell^1$ sum of the coefficients of $P$. The main result of this paper is the following.
\begin{thm} \label{main-result}
Let $M\ge 1$ and $N\ge 1$ be real numbers. Let $\varepsilon>0$ be given and $S$ be a measurable subset of $[0, N]$   with $|S|\ge \varepsilon N$. Let $P: \R\to \R$ be a monic polynomial of degree $d>1$ without constant term that satisfies $ \|P\|\le M$. Then there exists $\delta(\varepsilon, M, d)>0$ such that we can find
\begin{equation}\label{pattern-real}
x, x+t, x+P(t)\in S
\end{equation}
with $t>\delta(\varepsilon, M, d) N^{1/d}$ and $\delta=\delta(\varepsilon,M,d)$ satisfies the estimate $(\log\log\delta^{-1})^{-\frac16} \gtrsim_{M,d} \varepsilon$.
\end{thm}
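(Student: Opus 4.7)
The natural setup is to analyze the trilinear averaging form
\begin{equation*}
\Lambda(f_1,f_2,f_3) = \int_{\R} \int_{\R} f_1(x) f_2(x+t) f_3(x+P(t))\, \chi(t) \, dt\, dx,
\end{equation*}
where $\chi$ is a smooth nonnegative bump adapted to the window $t\in[\delta N^{1/d}, cN^{1/d}]$ for a suitable small $c=c(M,d)$ (so that $P(t)$ also stays inside $[0,N]$). A lower bound of the form $\Lambda(\mathbf{1}_S,\mathbf{1}_S,\mathbf{1}_S)\gtrsim \varepsilon^{O(1)} N^{1+1/d}$ produces the required pattern by positivity of the integrand. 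First I would rescale via $x\mapsto Nx$ and $t\mapsto N^{1/d}t$ to reduce to the case $N=1$; after rescaling, the polynomial $\tilde P(t)=N^{-1}P(N^{1/d}t)$ is still monic of degree $d$, and its subleading coefficients are in fact smaller than before, which simplifies the subsequent frequency analysis.

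The core is a structure-vs-pseudorandomness decomposition. Writing $\pr{\sigma}$ for a standard low-frequency Littlewood--Paley cutoff at scale $\sigma^{-1}$ and expanding $f_i=\pr{\sigma_i}f_i+(I-\pr{\sigma_i})f_i$, the main term $\Lambda(\pr{\sigma_1}\mathbf{1}_S,\pr{\sigma_2}\mathbf{1}_S,\pr{\sigma_3}\mathbf{1}_S)$, at sufficiently low frequency cutoffs, behaves like a product of averages of $\mathbf{1}_S$ and is therefore $\gtrsim \varepsilon^3$ times the appropriate normalization. The cross terms — each containing at least one high-frequency factor — are to be controlled by $L^p$ bounds for the bilinear operator
\begin{equation*}
B(g,h)(x)=\int g(x+t) h(x+\tilde P(t)) \chi(t)\, dt,
\end{equation*}
essentially a truncated bilinear Hilbert transform along the polynomial curve $(t,\tilde P(t))$. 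The availability of such bounds, developed in recent work on time-frequency analysis along curves, is what allows the argument to go beyond Bourgain's quadratic case, where a direct $TT^*$ oscillatory estimate sufficed.

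The quantitative loss is the delicate piece. Each regularization step introduces a logarithmic loss in the smoothing scale, and the bilinear bound for $B$ will lose a polynomial factor in the relevant dilation parameters. To reach a $\log\log$-type dependence, I would combine an outer pigeonhole over dyadic scales $\tau\in[\delta N^{1/d},N^{1/d}]$ (of which there are $\asymp\log\delta^{-1}$) with an inner choice of smoothing scales $\sigma_i$ matched to each $\tau$; this nested iteration converts a $\log^{-1}$-type bound for the bilinear operator into the $\log\log$-type bound in the statement, with the exponent $1/6$ reflecting the number of smoothing parameters and the exponents appearing in the bilinear estimate for $B$.

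The chief obstacle I anticipate is obtaining the bilinear estimate for $B$ with an explicit and uniform dependence on the polynomial coefficients. Because the theorem must cover the whole class $\|P\|\le M$ rather than the pure monomial $P(t)=t^d$, one cannot dilate the lower-order terms away, and the bilinear estimate must be stable under such perturbations; propagating the resulting stability constants through the two-scale pigeonhole and across the initial rescaling is where I expect most of the technical work to concentrate.
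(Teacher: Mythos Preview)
Your overall architecture---trilinear form, structure-versus-pseudorandomness splitting, bilinear operator bounds for the error terms, pigeonhole over scales for the quantitative loss---matches the paper's. But there is a genuine gap in the key step.

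You propose to control the high-frequency error terms by invoking $L^p$ bounds for the bilinear operator $B(g,h)(x)=\int g(x+t)h(x+\tilde P(t))\chi(t)\,dt$, citing recent work on bilinear Hilbert transforms along curves. The paper is explicit that such bounds are \emph{not} available in the generality needed here: bounding $\mathcal{H}_P$ for polynomials of degree greater than one that include a linear term is still open. Since the theorem allows $P$ to have a linear term, you cannot simply quote an off-the-shelf $L^p$ estimate. What the paper actually proves is a much more targeted single-scale estimate (their Lemma~1.4): for $g$ with $\widehat{g}$ supported in $[2^m,2^{m+1}]$ one has
\[
\Big\|\int f(x+2^{-(d-1)j}t)\,g(x+2^{-dj}P(2^jt))\,\tau_\ell(t)\,dt\Big\|_{L^1([0,1])}\le C_\ell\,2^{-\gamma m}\|f\|_2\|g\|_2,
\]
with $C_\ell\le 2^{\gamma_{d,M}\ell}$, and---crucially---\emph{only for $(j,\ell)$ in carefully constructed ``admissible'' sets}. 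These sets are built so that at scale $2^{j-\ell}$ the rescaled polynomial is dominated by a single monomial (or, when the linear term dominates, by a single nonlinear monomial plus the linear part). Your rescaling observation that the subleading coefficients of $\tilde P$ shrink is correct but does not let you discard them; the dominating monomial depends on the scale, and at transition scales no single monomial dominates. The paper handles this by showing the bad scales form a set of bounded size (depending only on $d$), so that admissible sets with bounded gaps exist; this is a genuine structural idea your outline does not anticipate.

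A second, smaller point: your description of the quantitative mechanism is off. The paper does not pigeonhole over the $t$-scale $\tau$. For fixed $j$ it runs Bourgain's iteration over smoothing parameters $\ell'<\ell<\ell''$ (all from the admissible set $\Lambda$), showing that either the trilinear integral is large or $\|f*\vartheta_{\ell_k}-f*\vartheta_{\ell_{k+1}}\|_2\gtrsim\varepsilon^3$; almost-orthogonality of these differences in $L^2$ forces the second alternative to fail after $O(\varepsilon^{-6})$ steps. Because the scales $\ell_k$ grow geometrically with ratio $C=C(M,d,\varepsilon)$, this yields $\delta\gtrsim 2^{-C^{O(\varepsilon^{-6})}}$, whence the $(\log\log\delta^{-1})^{-1/6}\gtrsim\varepsilon$ bound. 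The exponent $1/6$ comes from the $\varepsilon^{-6}$ in the orthogonality count, not from ``the number of smoothing parameters''.
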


When $P(t)=2t$, then \eqref{pattern-real} is a consequence of the classical Roth theorem \cite{Roth}. See also \cite{Bou0} for an alternative proof and extensions to results of Szemer\'edi type. In the special case $P(t)=t^{d}$ with $d\neq 1$, Theorem \ref{main-result} is due to Bourgain \cite{Bou}. We extend his result to general polynomials.

A standard argument based on Lebesgue's density theorem shows that our theorem would hold trivially if we only asked for $t$ to be positive. Thus, the main point of our result is the gap estimate giving a lower bound on $t$. A related work that studies the existence of certain polynomial patterns in fractal subsets of $\R^n$ is \cite{HLP15}. The Lebesgue density argument does not apply in that case, so the goal in that paper is establishing existence of certain patterns without a gap estimate.

After concluding our work on this result we were made aware that a less quantitative variant of our theorem (without the estimate on $\delta$) can also be deduced from the main result in \cite{BL96}.

For the integers, the problem of searching for polynomial patterns in various sets, for instance the primes, has been studied intensively. We refer to \cite{TZ}, \cite{TZ2} and the references contained therein.

Our proof of Theorem \ref{main-result} is closely related to the study of the bilinear Hilbert transform along polynomial curves. Define 
\begin{equation}\label{bht-polynomial}
\mathcal{H}_P(f, g)(x):=p.v.\int_{\R} f(x-t)g(x-P(t))\frac{dt}{t}.
\end{equation}
If $P(t)=2t$, this is the classical bilinear Hilbert transform, which is the subject of Lacey and Thiele's  breakthrough papers \cite{LT1}, \cite{LT2} and has since been studied extensively. 
For certain nonlinear $P$ the operator in \eqref{bht-polynomial} has recently been studied in \cite{Li}, \cite{Lie}, \cite{LiXiao}, \cite{GuoXiao}, \cite{Lie2}. We invite the reader to consult these papers to learn about the development of this subject.

Another closely related object is the Hilbert transform along the curve $(t, P(t))_{t\in \R}$. For a function $f: \R^2\to \R$, we let 
\begin{equation}\label{hilbert-transform-curve}
H_P(f)(x, y)=p.v.\int_{\R} f(x-t, y-P(t))\frac{dt}{t}.
\end{equation}
In fact, the operators $\mathcal{H}_P$ and $H_P$ share the same Fourier multiplier. We refer to \cite{GHLR} and the references contained therein for historical background on the study of the operator \eqref{hilbert-transform-curve}.

The principle of using estimates for multilinear singular integrals to study patterns in subsets of the Euclidean space has also been  used elsewhere in the recent literature (see  \cite{CMP}, \cite{DKR}).\\

We now turn to describing the structure of the proof of Theorem \ref{main-result}. From now on $P$ will be a fixed monic polynomial of degree $d>1$ satisfying $\|P\|\le M$ and lacking a constant term.
Let $f$ be a function on $\R$ such that $0\le f\le 1$ and $\int_0^N f\ge \varepsilon N$. Assume that we could prove
\begin{equation*}\int_0^N \int_0^{N^{1/d}} f(x) f(x+t) f(x+P(t))dt dx > \delta(\varepsilon, M, d)N^{1+\frac{1}{d}}.
\end{equation*}
Then Theorem \ref{main-result} follows immediately by setting $f=
\mathbbm{1}_S$. 
While possibly changing $\delta$ by multiplication with a constant depending only on $d$, we may assume without loss of generality that $N=2^{jd}$ for some $j\in \N$. Changing variables $x\to Nx$ and $t\to N^{1/d}t$ and replacing $f(x)$ by $f(N^{-1}x)$, we see that it suffices to show 
\begin{align}\label{toshow}
\int_0^1  \int_0^{1} f(x) f(x+ 2^{-(d-1)j} t) f(x+2^{-dj}P(2^j t))dt dx > \delta(\varepsilon, M, d)
\end{align} 
for all functions $f$ with $0\le f\le 1$, $\int_0^1 f \ge \varepsilon$.

In the case $P(t)=t^d$ with $d\neq 1$, Bourgain \cite{Bou} proved \eqref{toshow} for all $j\in \N$. Note that in this case $P$ is scaling-invariant in the sense that $2^{-dj}P(2^j t)=P(t)$. For a general polynomial, we do not know how to prove \eqref{toshow} for all $j\in \N$. However we can prove \eqref{toshow} for sufficiently many $j$.
\begin{defi}Let $\Gamma_d$ be a constant depending only on the degree $d$ that is to be determined later.  A set $\mathcal{E}=\{j_1<j_2<\cdots\}\subset \N$ is called \emph{admissible} if
\begin{equation*}
j_1\le\Gamma_d\text{ and }j_{i+1}-j_i\le \Gamma_d \text{ for every } i\ge 1. 
\end{equation*}
\end{defi}
To prove Theorem \ref{main-result}, it suffices to show \eqref{toshow} for all $j$ contained in an admissible set $\mathcal{E}$. 
\begin{prop}\label{main-prop}There exists an admissible set $\mathcal{E}\subset\N$ with $0\in\mathcal{E}$ such that for every $\varepsilon>0$, we can find $\delta>0$ with $(\log\log\delta^{-1})^{-\frac16}\gtrsim_{M,d} \varepsilon$ such that for every $0\le f\le 1$ with $\int_0^1 f\ge \varepsilon$ we have 
\begin{equation}\label{0223e1.3}
\int_0^1  \int_0^{1} f(x) f(x+2^{-(d-1)j}t) f(x+2^{-dj}P(2^j t))dt dx > \delta,
\end{equation}
for every $j\in\mathcal{E}$.
\end{prop}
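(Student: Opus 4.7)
The plan is to combine Bourgain's density-increment scheme for $P(t)=t^d$ with a one-scale oscillatory-integral estimate of the type developed for the bilinear Hilbert transform along polynomial curves. Fix $j\in\mathcal{E}$ and denote the trilinear form in \eqref{0223e1.3} by $\Lambda_j(f)$, which I would factor as $\Lambda_j(f)=\int_0^1 f(x)\,B_j[f,f](x)\,dx$ with
\begin{equation*}
B_j[g,h](x):=\int_0^1 g(x+2^{-(d-1)j}t)\,h(x+2^{-dj}P(2^jt))\,dt.
\end{equation*}
The aim is a dichotomy: either $\Lambda_j(f)>\delta$, or $f$ admits a density increment $|I|^{-1}\int_I f \ge \varepsilon + c\varepsilon^p$ on some sub-interval $I\subset[0,1]$ of length $|I|\gtrsim 2^{-\Gamma_d}$ with the centre of $I$ at an admissible scale.

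The analytic heart is a \emph{cancellation lemma}. Choose a smoothing scale $\rho=2^{-j-k_0}$ for a suitable $k_0=k_0(d,M)$, let $\phi_\rho$ be an $L^1$-normalized smooth bump at scale $\rho$, and split $f=f_{*}+f_{**}$ with $f_{*}=f*\phi_\rho$. I would aim for
\begin{equation*}
\bigl|\Lambda_j(f)-\Lambda_j(f_{*})\bigr|\lesssim_{M,d}\|f_{**}\|_{L^2}^{\alpha}
\end{equation*}
with some $\alpha=\alpha(d)>0$, proved by decomposing $f_{**}$ into Littlewood--Paley pieces and bounding the resulting trilinear multiplier via van der Corput's lemma applied to the phase $\xi\cdot 2^{-dj}P(2^jt)+\eta\cdot 2^{-(d-1)j}t$. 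The admissibility condition on $\mathcal{E}$ is calibrated so that the coefficients of the rescaled polynomial $2^{-dj}P(2^jt)$ sit in a bounded, non-degenerate range for every $j\in\mathcal{E}$; this is exactly what fixes $\Gamma_d=\Gamma_d(d)$ and makes the van der Corput bound uniform in $j\in\mathcal{E}$. The lemma can be viewed as a truncated, single-scale version of the bounds underlying the analysis of $\mathcal{H}_P$ and $H_P$ in \cite{Li,Lie,GuoXiao,Lie2,GHLR}.

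With the cancellation lemma in hand the argument bifurcates. If $\|f_{**}\|_{L^2}\le\eta\varepsilon$ for a small $\eta=\eta(d,M)$, then $\Lambda_j(f)$ is close to $\Lambda_j(f_{*})$, and the latter is bounded below by positivity: $f_{*}$ is a smooth non-negative function with mean $\gtrsim \varepsilon$, the map $t\mapsto(2^{-(d-1)j}t,\,2^{-dj}P(2^jt))$ has a Jacobian bounded away from zero off a small neighbourhood of $t=0$, and a standard averaging argument (with boundary terms controlled by periodization) yields $\Lambda_j(f_{*})\gtrsim_{M,d}\varepsilon^{c(d)}$. If instead $\|f_{**}\|_{L^2}>\eta\varepsilon$, then pigeonholing the $L^2$-oscillation of $f$ over intervals of length $\rho$ produces a sub-interval with the required density excess.

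To conclude, one argues by contradiction: suppose $\Lambda_j(f)\le\delta$ for some $j\in\mathcal{E}$ and some admissible $f$. The dichotomy yields a rescaled function $f^{(1)}$ on $[0,1]$ with $\int f^{(1)}\ge\varepsilon(1+c\varepsilon^{p-1})$, to which one applies the hypothesis at the next element of $\mathcal{E}$ to obtain $f^{(2)}$ with still larger mean, and so on. Because densities are bounded by $1$, the iteration terminates in at most $O(\varepsilon^{-q})$ steps, and the careful bookkeeping of how the rescalings interact with the shifts $2^{-(d-1)j}$ and $2^{-dj}$ produces both the admissible set $\mathcal{E}$ and the quantitative bound $\log\log\delta^{-1}\lesssim_{M,d}\varepsilon^{-6}$; the exponent $6$ is the product of $1/\alpha$ and the density-loss exponent per step. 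The principal obstacle is the cancellation lemma: for the scale-invariant $P(t)=t^d$ the rescaled polynomial is $j$-independent and a single oscillatory estimate suffices, but for general $P$ the lower-order coefficients of $P(2^jt)/2^{jd}$ introduce $j$-dependent interference that must be tamed by admissibility, and extracting the quantitative exponent $\alpha$ with the claimed dependence on $M$ constitutes the technical core of the argument.
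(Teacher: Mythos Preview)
Your proposal has a genuine structural gap: the density-increment iteration does not close for this problem, and the paper (following Bourgain) takes a different route.

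The dichotomy you describe fails at the increment branch. Having $\|f_{**}\|_{L^2}>\eta\varepsilon$ says only that $f$ has nontrivial $L^2$ oscillation at scale $\rho$; it does \emph{not} produce a subinterval on which the average of $f$ exceeds $\varepsilon$ by a definite amount. In the classical Roth density-increment one extracts a large Fourier coefficient (correlation with a linear phase), and it is that arithmetic structure, not raw $L^2$ energy, that yields the increment. Moreover, even if an increment interval $I$ were found, rescaling $I$ back to $[0,1]$ does not send $\Lambda_j$ to $\Lambda_{j'}$ for any $j'$: the two shifts $2^{-(d-1)j}t$ and $2^{-dj}P(2^jt)$ scale differently under $x\mapsto a+|I|x$, so the rescaled form is not of the type covered by the proposition and you cannot ``apply the hypothesis at the next element of $\mathcal{E}$''. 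Your description of admissibility is also off: the coefficients of $2^{-dj}P(2^jt)$ are \emph{not} uniformly bounded over $j\in\mathcal{E}$; rather, the pair $(\mathcal{E},\Lambda)$ is arranged so that at each dyadic $t$-scale $2^{-\ell}$ with $\ell\in\Lambda$ a single monomial dominates.

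The paper avoids density increment entirely. It introduces a \emph{second} admissible set $\Lambda$ of smoothing scales and, for a triple $\ell'<\ell<\ell''$ in $\Lambda$, decomposes $2^{\ell}I=I_1+I_2+I_3$: the high-frequency piece $I_3$ is killed by the oscillatory Lemma~\ref{main-lemma}; the low-frequency piece $I_1$ is compared to $\int f\,(f*\vartheta_{\ell'})\,(f*\vartheta_{\ell'+(d-1)j})$ and bounded below by $c_0\varepsilon^3$ via the elementary Lemma~\ref{lemma:bourgain} (this replaces your vague Jacobian/averaging argument); and the intermediate piece $I_2$ is controlled by $\|f*\vartheta_{\ell'}-f*\vartheta_{\ell''}\|_2$. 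The resulting dichotomy is: either $I\gtrsim 2^{-\ell}\varepsilon^3$, or $\|f*\vartheta_{\ell_k}-f*\vartheta_{\ell_{k+1}}\|_2+\|f*\vartheta_{\ell_k+(d-1)j}-f*\vartheta_{\ell_{k+1}+(d-1)j}\|_2\gtrsim\varepsilon^3$. One then pigeonholes over a geometrically growing sequence $(\ell_k)\subset\Lambda$: since $\sum_k\|f*\vartheta_{\ell_k}-f*\vartheta_{\ell_{k+1}}\|_2^2\lesssim\|f\|_2^2\le 1$ by almost-orthogonality, the second alternative can hold for at most $O(\varepsilon^{-6})$ values of $k$, forcing the first alternative at some $k\lesssim\varepsilon^{-6}$ and giving the double-logarithmic bound. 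No rescaling of $f$ is ever performed, so the form $\Lambda_j$ is preserved throughout and the issue you flagged about $j$-dependent lower-order terms is confined to the proof of Lemma~\ref{main-lemma}.
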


Let $\tau$ be a non-negative smooth bump function supported in $[1/2, 2]$ with integral $1$. For $\ell\in\N$ we denote $\tau_\ell(x)=2^{\ell}\tau(2^{\ell} x)$. The key lemma in the proof of Proposition \ref{main-prop} is the following.
\begin{lem} \label{main-lemma}
There exists $\gamma>0$ and admissible sets $\mathcal{E},\Lambda$, $0\in\mathcal{E}$, such that for every $(j,\ell)\in\mathcal{E}\times \Lambda$ and every test function $g$ with $\mathrm{supp}(\widehat{g})$ contained in $[2^m, 2^{m+1}]$, $m\geq 0$, we have 
\begin{equation}\label{main-estimate-3}
\Big\| \int_\R f(x+2^{-(d-1)j}t)g(x+2^{-dj}P(2^j t))\tau_{\ell}(t)dt\Big\|_{L^1([0,1])}  \le C_\ell 2^{-\gamma m}\|f\|_{2} \|g\|_{2},
\end{equation}
where $C_\ell\le 2^{\gamma_{d, M} \ell}$ for some $\gamma_{d, M}>0$ depending only on $d$ and $M$.
\end{lem}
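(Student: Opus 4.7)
The plan is to reduce \eqref{main-estimate-3} to an $L^2(\R)$ operator-norm estimate and then extract the decay in $m$ through an oscillatory-integral analysis combined with a $T^*T$ argument for the linear operator $f\mapsto T_{j,\ell}(f,g)$ obtained by freezing the high-frequency factor $g$.

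Since $|[0,1]|=1$, Cauchy--Schwarz reduces the claim to the $L^2(\R)$ bound
\[ \Big\|\int_\R f(x+A_j t)g(x+B_j(t))\tau_\ell(t)\,dt\Big\|_{L^2(\R)}\le C_\ell 2^{-\gamma m}\|f\|_2\|g\|_2, \]
where $A_j=2^{-(d-1)j}$ and $B_j(t)=2^{-dj}P(2^jt)$. Rescaling $t=2^{-\ell}s$ normalizes the bump to $\tau$ and produces $B_{j,\ell}(s)=\sum_{i=1}^d a_i 2^{(i-d)j-i\ell}s^i$, whose leading monomial is $2^{-d\ell}s^d$ (since $P$ is monic), and the shift scale becomes $A_{j,\ell}=2^{-(d-1)j-\ell}$. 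Plancherel expresses the rescaled operator as a bilinear Fourier multiplier with symbol
\[ m_{j,\ell}(\xi_1,\eta)=\int e^{2\pi i(\xi_1 A_{j,\ell}s+\eta B_{j,\ell}(s))}\tau(s)\,ds. \]
For $\eta\in[2^m,2^{m+1}]$ and $(j,\ell)$ in suitably chosen admissible sets $\mathcal{E}\times\Lambda$, the second derivative of the phase satisfies $|\partial_s^2(\eta B_{j,\ell}(s))|\gtrsim 2^{m-d\ell}$ uniformly on $s\in[1/2,2]$; admissibility is what forces the lower-order coefficients $a_i 2^{(i-d)j-i\ell}$ with $i<d$ to be dominated by the leading coefficient $2^{-d\ell}$ on this range, so only the monomial $s^d$ governs the stationary phase.

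To convert this symbol information into the operator bound, I would apply $T^*T$ to $T_g:f\mapsto T_{j,\ell}(f,g)$. The kernel of $T_gT_g^*$ is
\[ K(x,y)=A_{j,\ell}^{-1}\int g(x+B_{j,\ell}(s))\overline{g(y+B_{j,\ell}(s+u))}\tau(s)\tau(s+u)\,ds,\qquad u=\tfrac{x-y}{A_{j,\ell}}. \]
Writing $g(z)=e^{2\pi i\cdot 2^m z}\tilde g(z)$ with $\tilde g$ bandlimited to $[0,2^m]$ pulls out a modulating phase $e^{2\pi i\cdot 2^m D_u(s)}$, where $D_u(s)=B_{j,\ell}(s+u)-B_{j,\ell}(s)-A_{j,\ell}u$. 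Iterated integration by parts in $s$ against this phase (using $D'_u(s)\sim 2^{-d\ell}u$, valid by admissibility) produces a factor $\min(1,(2^{m-d\ell}|u|)^{-N})$ in $|K|$, and Schur's test then delivers $\|T_g\|_{L^2\to L^2}\le C_\ell 2^{-\gamma m}\|g\|_2$ for some $\gamma=\gamma(d)>0$, with the $2^{O(\ell)}$ losses in the IBP and Schur steps absorbed into the factor $C_\ell\le 2^{\gamma_{d,M}\ell}$.

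The principal obstacle is extracting genuine $m$-decay rather than a mere pointwise symbol estimate: a direct Cauchy--Schwarz on the $\eta$-support of $\hat g$ would cost a factor $2^{m/2}$ that precisely cancels the $2^{-m/2}$ gain of a naive van der Corput bound on $m_{j,\ell}$. The $T^*T$ route circumvents this because the oscillation of the modulated $g\bar g$ is paired against the phase $2^m D_u$, so the decay is realized only after integrating in the physical variable $u$. Admissibility of $(j,\ell)$ is essential precisely at the point where one demands that the leading monomial $t^d$ dictates the stationary phase behavior, thereby reducing the analysis for a general monic polynomial $P$ to (a perturbation of) the monomial case handled by Bourgain; the remaining technical work is the careful bookkeeping of the $\ell$-dependent constants to verify the claimed bound $C_\ell\le 2^{\gamma_{d,M}\ell}$.
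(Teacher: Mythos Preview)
Your proposal has two genuine gaps.

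\textbf{Admissibility does not force the top monomial to dominate.} You assert that a suitable admissible choice of $(j,\ell)$ makes the lower-order terms $a_i 2^{(i-d)j-i\ell}s^i$ negligible compared to $2^{-d\ell}s^d$. This amounts to $|a_i|\,2^{(d-i)(\ell-j)}\ll 1$ for each $i<d$. But the lemma requires $0\in\mathcal{E}$, and admissibility forces the first element of $\Lambda$ to lie below a constant $\Gamma_d$ depending only on $d$; at $j=0$ and such $\ell$ the condition becomes $|a_i|\,2^{(d-i)\ell}\ll 1$, which fails whenever some $|a_i|$ is of size $1$ or larger. In the paper the admissible sets are built so that at every relevant scale \emph{some} monomial of degree $d_0\in\{1,\dots,d\}$ dominates (and, if $d_0=1$, a secondary nonlinear monomial $d_1$ dominates the rest). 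The ensuing proof then splits into three cases according to $d_0$; the case $d_0=1$ (linear term dominant) is genuinely different and cannot be subsumed into a ``perturbation of $t^d$'' picture.

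\textbf{The $T^*T$-on-$f$ argument loses the claimed $m$-decay.} After you extract the phase $2^mD_u(s)$, the remaining amplitude still contains $\tilde g(x+B_{j,\ell}(s))\,\overline{\tilde g(y+B_{j,\ell}(s+u))}$ with $\tilde g$ bandlimited to $[0,2^m]$. Differentiating this in $s$ produces a factor $|\tilde g'|\cdot|B'_{j,\ell}|$, which is of size $2^{m}\cdot 2^{-d\ell}=2^{m-d\ell}$; this matches the phase frequency $2^m|D'_u(s)|\sim 2^{m-d\ell}|u|$ up to the factor $|u|$. Hence each integration by parts gains only $|u|^{-1}$, not the $(2^{m-d\ell}|u|)^{-1}$ you claim, and Schur's test then yields no decay in $m$ at all (in fact one is left with a bound $\|T_g\|\lesssim\|\tilde g\|_\infty\le 2^{m/2}\|g\|_2$).

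The paper proceeds differently: it first Littlewood--Paley decomposes $f$ and isolates the critical frequency window $|\xi|\sim 2^{m+m_0}$ (the off-diagonal pieces are disposed of by a direct nonstationary-phase bound). On the diagonal it dualizes with $h\in L^\infty([0,1])$, applies stationary phase in $t$ to obtain a dual phase $\Psi(\xi,\eta)$, and then runs a $TT^*$ argument in the \emph{frequency} variable $\eta$ via H\"ormander's oscillatory-integral lemma, which requires the Hessian bound $|\partial_\xi\partial_\eta\Psi|\gtrsim 1$. This handles the case $d_0\ge 2$ with $|m_0|\le(1-\kappa)m$; the remaining regimes need a $\sigma$-uniformity argument ($d_0\ge2$, $|m_0|$ large) or a separate phase analysis when $d_0=1$. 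Your outline misses both the frequency localization of $f$ and this case structure.
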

In the case that $P$ is a monomial, Bourgain \cite{Bou} proved Lemma \ref{main-lemma} for all $j,\ell\in\N$. In Section \ref{sec:reduction} we show how the lemma implies Proposition \ref{main-prop}. The admissible sets $\mathcal{E}$ and $\Lambda$ are constructed in Section \ref{sec:admissiblesets}. In Section \ref{sec:main} we prove Lemma \ref{main-lemma}.

Estimate \eqref{main-estimate-3} is related to certain estimates for the bilinear Hilbert transforms along curves that appeared in \cite{Li}, \cite{Lie}, \cite{LiXiao}, \cite{GuoXiao}, \cite{Lie2}.
This enables us to adapt the approach used in these papers to prove Lemma \ref{main-lemma}. 
One difference to the congruent estimates for the bilinear Hilbert transform along curves is that \eqref{main-estimate-3} contains an extra scaling parameter.
Another difference is that we are allowing $P$ to have a linear term, while the methods described in the present literature for the bilinear Hilbert transform along curves cannot handle linear terms.
In particular, the problem of bounding $\mathcal{H}_P$ for $P$ being a polynomial of degree greater than one that includes a linear term is still open.

For the proof of Lemma \ref{main-lemma} we borrow a basic idea from the study of bilinear Hilbert transforms along curves: we treat a general polynomial as a perturbation of whatever monomial is dominating at each scale. However, at those scales where the linear term is dominating this turns out not to be enough. In that case we need to go one step further and see which of the remaining monomials is dominating the difference of the polynomial and its linear term.\\

{\bf Notation.} Throughout this paper, we will write 
$x\lesssim y$ to mean that there exists a constant $C$ depending only on fixed parameters depending on context (such as the degree $d$ of the polynomial $P$) such that $x\le C y$. We write $x\lesssim_p y$ to denote dependence of the implicit constant on the parameter $p$. Similarly, we define $x\approx y$ to mean that $x\lesssim y$ and $y\lesssim x$. $\mathbbm{1}_E$ will always denote the characteristic function of the set $E$.\\

{\bf Acknowledgements.} The authors thank Christoph Thiele for a helpful discussion of Bourgain's approach. They also thank Pavel Zorin-Kranich for pointing out the connection to Bergelson and Leibman's work. The second and third authors are indebted to Xiaochun Li, Victor Lie and Lechao Xiao for their generosity and numerous discussions on bilinear Hilbert transforms along curves. The first author is supported by the Hausdorff Center for Mathematics. The third author is supported by the Hausdorff Center for Mathematics and the German National Academic Foundation. This material is based upon work supported by the National Science Foundation under Grant No. DMS-1440140 while the authors were in residence at the Mathematical Sciences Research Institute in Berkeley, California, during the Spring semester of 2017.

\section{Reduction to the main lemma}\label{sec:reduction}
In this section we derive the   estimate \eqref{0223e1.3} from Lemma \ref{main-lemma}. The derivation is a straightforward adaptation of Bourgain's argument \cite{Bou} to our setting.
Let $\vartheta$ be  a non-negative even smooth  function supported on $[-2, 2]$, constant on $[-1,1]$, and monotone on $[1,2]$. We normalize it such that  $\widehat{\vartheta}(0)=1$ and denote $\vartheta_\ell(x):=2^{\ell}\vartheta(2^{\ell}x)$.

\begin{lem}[Bourgain \cite{Bou}]\label{lemma:bourgain}
For a non-negative function $f$ supported on $[0,1]$ and  $k,l\in \N$ we have 
\begin{equation*}
\int_0^1  f (f* \vartheta_{k}) (f* \vartheta_{\ell}) \ge c_0 \Big(\int_0^1 f\Big)^3
\end{equation*}
for some constant $c_0>0$ depending only on the choice of $\vartheta$.
\end{lem}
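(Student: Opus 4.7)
The plan is to use the pointwise lower bound $\vartheta \ge c_1 \mathbbm{1}_{[-1,1]}$, where $c_1 := \vartheta(0) > 0$, to reduce the inequality to an averaging problem about $f$ itself. After rescaling this reads $\vartheta_r(x) \ge c_1 \cdot 2^r \mathbbm{1}_{[-2^{-r}, 2^{-r}]}(x)$, and combined with $f \ge 0$ it yields $(f * \vartheta_r)(x) \ge 2 c_1 B_r(x)$, where
\begin{equation*}
B_r(x) := \frac{1}{2 \cdot 2^{-r}} \int_{x - 2^{-r}}^{x + 2^{-r}} f(y)\, dy
\end{equation*}
is the local average of $f$ on scale $2^{-r}$. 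Substituting these bounds for both $f * \vartheta_k$ and $f * \vartheta_\ell$, and writing $\alpha := \int_0^1 f$, it suffices to prove $\int_0^1 f B_k B_\ell \gtrsim \alpha^3$.

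The next step will be to rewrite this integral via Fubini as a scale-normalized average of the triple correlation:
\begin{equation*}
\int_0^1 f B_k B_\ell = \frac{1}{4 \cdot 2^{-k-\ell}} \int_{-2^{-k}}^{2^{-k}} \int_{-2^{-\ell}}^{2^{-\ell}} T(s,t)\, ds\, dt, \qquad T(s,t) := \int f(x) f(x+s) f(x+t)\, dx.
\end{equation*}
The relevant properties of $T$ are: $T \ge 0$; $T$ is supported in $[-1,1]^2$ (since $f$ lives on $[0,1]$) with $\iint T = \alpha^3$; and $T(0,0) = \int f^3 \ge \alpha^3$ by Jensen's inequality. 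The extreme cases are immediate: for $k = \ell = 0$ the average is exactly $\alpha^3/4$, and for $k, \ell \to \infty$ it tends to $T(0,0) \ge \alpha^3$ by Lebesgue differentiation.

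For intermediate scales we would apply the Cauchy--Schwarz inequality to decouple the two variables, reducing the two-scale problem to the single-scale bound
\begin{equation*}
\frac{1}{2 \cdot 2^{-k}} \int_{-2^{-k}}^{2^{-k}} R(s)\, ds \gtrsim \alpha^2, \qquad k \in \N,
\end{equation*}
for the autocorrelation $R(s) := \int f(x) f(x+s)\, dx$. This single-scale inequality will be the crux of the proof: at $k = 0$ one recovers the global average $\alpha^2/2$, and as $k \to \infty$ one approaches $R(0) = \int f^2 \ge \alpha^2$ by Lebesgue differentiation, so the main obstacle is the uniform bound at intermediate scales. We expect this to follow from the combined spatial and Fourier positivity of $R$---namely $R \ge 0$, $\widehat R = |\widehat f|^2 \ge 0$, $\int R = \alpha^2$, $R(0) \ge \alpha^2$---which together prevent $R$ from being concentrated away from the origin.
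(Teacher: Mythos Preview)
Your initial reduction---replacing $f*\vartheta_r$ by the centred averages $B_r$ via the pointwise bound $\vartheta \ge c_1 \mathbbm{1}_{[-1,1]}$---is correct and matches the paper's first move (the paper passes to dyadic martingale averages $E_r f$, which play the same role). The gap lies in the next step. You propose to ``apply the Cauchy--Schwarz inequality to decouple the two variables'' and thereby reduce the two-scale estimate $\int f B_k B_\ell \gtrsim \alpha^3$ to the single-scale estimate $\int f B_r \gtrsim \alpha^2$. But no Cauchy--Schwarz argument performs this decoupling: applied to $\int f B_k B_\ell$ in any of the natural ways, Cauchy--Schwarz produces only \emph{upper} bounds, and knowing $\int f B_k \ge c\alpha^2$ and $\int f B_\ell \ge c\alpha^2$ separately says nothing about where the mass of $f B_k$ sits relative to $B_\ell$. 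The diagonal case $k=\ell$ does reduce, via $(\int f B_k)^2 \le \alpha \int f B_k^2$, but this does not extend to $k\ne\ell$. Your Fourier--positivity heuristic for the single-scale bound is also incomplete as stated: $\widehat R \ge 0$ does not by itself prevent $\widehat R$ from concentrating at high frequencies where the transform of the averaging kernel is small; the missing ingredient is the support constraint on $f$, which forces $\widehat f$ to vary slowly near the origin.

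The paper's proof does not decouple the scales; it \emph{nests} them. Labelling the two scales as coarse and fine, one first applies H\"older over the coarse dyadic partition $\{J\}$ to get $\alpha^3 \le \sum_J |J|^{-2}\bigl(\int_J f\bigr)^3$, and then, on each coarse interval $J$, applies Cauchy--Schwarz over the fine sub-intervals $I\subset J$ to obtain $\bigl(\tfrac1{|J|}\int_J f\bigr)^2 \le \tfrac1{|J|}\int_J f\,(E_{\mathrm{fine}} f)$. Chaining these gives $\alpha^3 \le \int f (E_{\mathrm{fine}} f)(E_{\mathrm{coarse}} f)$ directly. The key point is that Cauchy--Schwarz is applied \emph{locally} on each coarse interval, so that the coarse average already appears as a weight in the output---this is the idea your outline is missing.
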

We include a proof of Lemma \ref{lemma:bourgain}, which was omitted in \cite{Bou}.
\begin{proof}
In this proof   all intervals are dyadic, that is, of the form $[2^km, 2^k(m+1)]$ for $k,m\in \Z$. 
For $k\in \Z$ we denote  by $E_k$ the dyadic martingale averages  
$$E_kf = \sum_{|I|=2^{-k}} \Big( \frac{1}{|I|}\int_I f \Big ) 1_I .$$
We claim that for any $0\leq k\leq \ell $ we have
\begin{align*}
\int_0^1  f(E_{k}f)(E_{\ell}f) \geq  \Big( \int_0^1  f \Big)^3  .
\end{align*}
Once this is shown, Lemma \ref{lemma:bourgain} follows by bounding the dyadic averages pointwise from above by the continuous averages $f*\vartheta_\ell$.

To see the claim we first observe that for any dyadic interval $J\subseteq [0,1]$ and any $k\in \Z$ with $2^{-k}\leq |J|$ we have by Cauchy-Schwarz
\begin{equation*}
\begin{split}
\Big(\frac{1}{|J|}  \int_J f \Big )^2& = \Big(\frac{1}{|J|} \sum_{I\subseteq J, \, |I|=2^{-k} }  \int_I f \Big )^2\\
&\leq \frac{1}{|J|} \sum_{I\subseteq J, \, |I|=2^{-k} }  \frac{1}{|I|} \Big(  \int_I f   \Big) ^2 =   \frac{1}{|J|} \int_J f\,(E_k f).
\end{split}
\end{equation*}
Combining this estimate with H\"older's inequality we obtain
\begin{align*}
\Big( \int_{0}^1 f \Big )^3 &  \leq \sum_{J\subseteq [0,1],\, |J|=2^{-\ell}}  \frac{1}{|J|^2} \Big ( \int_J f  \Big ) ^3 \\
&\leq \sum_{J\subseteq [0,1],\, |J|=2^{-\ell}} \Big ( \frac{1}{|J|} \int_J f \Big )\int_J f \, (E_kf)= \int_{0}^1 f  (E_kf) (E_\ell f),
\end{align*} 
which proves the claim.
\end{proof}

Now we are ready to deduce Proposition \ref{main-prop} from Lemmata \ref{main-lemma} and \ref{lemma:bourgain}.  
\begin{proof}[Proof of Proposition \ref{main-prop}] 
By localization in $x$ we may assume that    $ \textup{supp}(f)\subseteq [0,1]$. 
Denote
\begin{equation*}
I =\int_0^1\int_0^1  f(x)f(x+2^{-(d-1)j} t) f(x+2^{-dj}P(2^j t))dxdt.
\end{equation*}
For $\ell', \ell, \ell'' \in \Lambda$ with $1\leq \ell'\leq \ell \leq \ell'' $ we have 
\begin{align*}
 2^{\ell}I& \ge \int_0^1\int_0^1  f(x) f(x+2^{-(d-1)j}{t})f(x+2^{-dj}P(2^j t))\tau_{\ell}(t)dxdt\\ 
 &= I_1 + I_2 + I_3,
\end{align*}
 where
\begin{align*}
I_1 & =\int_0^1\int_0^1  f(x) f(x+2^{-(d-1)j}{t})(f*\vartheta_{\ell'})(x+2^{-dj}P(2^j t))\tau_{\ell}(t)dxdt,\\ 
I_2 & = \int_0^1\int_0^1  f(x) f(x+2^{-(d-1)j}{t})(f*\vartheta_{\ell''}-f*\vartheta_{\ell'})(x+2^{-dj}P(2^j t))\tau_{\ell}(t)dxdt,\\
I_3 & = \int_0^1\int_0^1  f(x)f(x+2^{-(d-1)j}{t})(f-f*\vartheta_{\ell''})(x+2^{-dj}P(2^j t)) \tau_{\ell}(t)dxdt. 
\end{align*}

We analyze each of the  terms separately.
Splitting $f-f*\vartheta_{\ell''}$ into Littlewood-Paley pieces and applying Lemma \ref{main-lemma}, it follows that for some $\sigma>0$ we have
\begin{align*}
 |I_3| \leq  2^{\gamma_{d, M}\ell-\sigma \ell''} \|f\|_{L^2(\R)}^2 \leq 2^{-100}c_0 \varepsilon^3,
\end{align*}  
where the last inequality holds provided that $\ell''$ is taken large enough with respect to $\ell$. Here $c_0$ is the constant from Lemma \ref{lemma:bourgain}.

To estimate $I_2$ we apply the Cauchy-Schwarz inequality in $x$, which yields
\begin{align*}
|I_2| &\le \int_0^1 \|f(x)f(x+2^{-(d-1)j}{t})\|_{L_x^2} \|(f*\vartheta_{\ell''}-f*\vartheta_{\ell'})(x+2^{-dj}P(2^j t))\|_{L_x^2} \tau_{\ell}(t) dt\\
&\leq \|f*\vartheta_{\ell''}-f*\vartheta_{\ell'}\|_2. 
\end{align*}
Passing to the last line we bounded the $L^\infty$ norm of $f$ and the $L^1$ norm of $\tau_{\ell}$ by one.

To estimate $I_1$ we compare it with 
\begin{align*}
I_4&=\int_0^1\int_0^1  f(x)f(x+2^{-(d-1)j}{t})(f*\vartheta_{\ell'})(x)    \tau_{\ell}(t)dx dt\\
&=\int_0^1 f(x) (f*\vartheta_{\ell'})  (x) (f* \tau_{\ell+(d-1)j})(x)dx.
\end{align*}
Consider the difference
\begin{equation*}
I_4-I_1= \int_0^1\int_0^1  f(x) f(x+2^{-(d-1)j}{t})\big( (f*\vartheta_{\ell'})  (x)-(f*\vartheta_{\ell'}) (x+2^{-dj}P(2^j t))\big ) \tau_{\ell}(t)dx dt.
\end{equation*}
By the mean value theorem we obtain  
\begin{equation*}
|(f*\vartheta_{\ell'})(x)-(f*\vartheta_{\ell'})(x+2^{-dj}P(2^j t))|\leq  2^{\ell'}\|f*(\vartheta')_{\ell'}  \|_\infty|2^{-dj}P(2^j t)| \leq dM  {2^{\ell'-\ell+1}},
\end{equation*}
whenever $t$ is in the support of $\tau_{\ell}$.
Choosing $\ell$ large enough with respect to $\ell'$ gives
\begin{equation*}
|I_4-I_1|\leq  2^{-100}c_0\varepsilon^3.
\end{equation*}
We return to analyzing the term $I_4$, which we write as
\begin{align}\label{termI41}
I_4 &= \Big( \int_0^1  f(x) (f*\vartheta_{\ell'})(x) \big ((f* \tau_{\ell+(d-1)j})(x)-(f* \vartheta_{\ell'+(d-1)j})(x) \big )dx\Big)\\ \label{termI42}
&+ \Big( \int_0^1  f(x) (f*\vartheta_{\ell'})  (x) (f* \vartheta_{\ell'+(d-1)j})(x)dx \Big ) 
\end{align}
By Lemma \ref{lemma:bourgain}, the term \eqref{termI42} 
is bounded from below by $c_0 \varepsilon^3$. 
For \eqref{termI41} we use   the triangle inequality and  Young's convolution inequality to estimate
\begin{align} \nonumber
& \|f*\tau_{\ell+(d-1)j}-f*\vartheta_{\ell'+(d-1)j}\|_2\\ \label{termI421}
& \leq  \|(f*\tau_{\ell+(d-1)j}*\vartheta_{\ell''+(d-1)j})-(f*\vartheta_{\ell'+(d-1)j}*\tau_{\ell+(d-1)j})\|_2\\ \label{termI422}
&   +    \|\tau_{\ell+(d-1)j}-(\tau_{\ell+(d-1)j}*\vartheta_{\ell''+(d-1)j})\|_1\\
\label{termI423}
& + \|\vartheta_{\ell'+(d-1)j}-(\vartheta_{\ell'+(d-1)j}*\tau_{\ell+(d-1)j})\|_1
  \end{align}
By another application of Young's convolution inequality in \eqref{termI421} and rescaling in \eqref{termI422} and \eqref{termI423}, we bound the last display by 
\begin{align*}
 \|(f*\vartheta_{\ell''+(d-1)j})-(f*\vartheta_{\ell'+(d-1)j})\|_2
  +   \|\tau_{\ell}-(\tau_{\ell}*\vartheta_{\ell''})\|_1 + \|\vartheta_{\ell'}-(\vartheta_{\ell'}*\tau_{\ell})\|_1.
\end{align*}
By the mean value theorem, the second and third term are bounded from above by    $2^{-100}c_0   \varepsilon^3$  provided $\ell''$ is chosen large enough with respect to $\ell$, and $\ell$ large enough with respect to $\ell'$. This in turn bounds \eqref{termI41}  from above by 
$$ \|f*\vartheta_{\ell''+(d-1)j}-f*\vartheta_{\ell'+(d-1)j}\|_2 + 2^{-99}c_0\varepsilon^3.$$
 
From   the estimates for the terms $I_1,I_2,I_3,I_4$ and $I_4-I_1$     we  obtain
\begin{align*}
c_0\varepsilon^3 \leq 2^{\ell}I +  \|f*\vartheta_{\ell'} -f*\vartheta_{\ell''}\|_2+\|f*\vartheta_{\ell'+(d-1)j}-f*\vartheta_{\ell''+(d-1)j}\|_2 + 2^{-90}c_0\varepsilon^3
\end{align*}
Therefore, we either have  
 $I> 2^{-\ell-10} c_0 \varepsilon^3$,
or 
\begin{equation*}
\|f*\vartheta_{\ell'} -f*\vartheta_{\ell''}\|_2+\|f*\vartheta_{\ell'+(d-1)j}-f*\vartheta_{\ell''+(d-1)j}\|_2> 2^{-10}c_0\varepsilon^3.
\end{equation*}
By the preceding discussion we can construct  a sequence $\{\ell_0<\ell_1<\cdots < \ell_k <\cdots\}\subseteq\Lambda$, which is independent of $f$ and $j$ and satisfies $\ell_{k+1} \le C \ell_k$ for some sufficiently large constant $C$ that depends on $M,\varepsilon,d$ such that for each $k$
either $$I> 2^{-\ell_{k+1}-10} c_0   \varepsilon^3$$ or 
\begin{align}\label{secondalt}
\|f*\vartheta_{\ell_k} -f*\vartheta_{{\ell}_{k+1}}\|_2+\|f*\vartheta_{{\ell_k}+(d-1)j}-f*\vartheta_{\ell_{k+1}+(d-1)j}\|_2> 2^{-10} c_0 \varepsilon^3.
\end{align}
 Observe that  for any $K\geq 0$ one has
\begin{align}\nonumber
&\sum_{k=0}^K\Big( \|f*\vartheta_{\ell_k} -f*\vartheta_{\ell_{k+1}}\|^2_2+\|f*\vartheta_{{\ell_k}+(d-1)j}-f*\vartheta_{\ell_{k+1}+(d-1)j}\|^2_2\Big )\\\label{orthogonality}
& \leq C_0 \|f\|^2_2\leq C_0
\end{align}
with $C_0$ independent of $K$ and $f$. Let us fix $K> C_02^{100}c_0^{-2}\varepsilon^{-6}$. 
If \eqref{secondalt} holds for all $0<k \le K$, then \eqref{orthogonality} yields $K\leq C_02^{100}c_0^{-2}\varepsilon^{-6}$, which is a contradiction. Thus, for some $0\le k \le K$ we necessarily have $I>2^{-\ell_{k+1}-10}  c_0 \varepsilon^3$. Together with $\ell_{k+1} \le C \ell_k$ and $\ell_0\leq \Gamma_d$
 this gives the lower estimate on $I$ claimed in Proposition \ref{main-prop}.
\end{proof}

\section{Construction of admissible sets}\label{sec:admissiblesets}

In this section we construct the admissible sets $\mathcal{E}$ and $\Lambda$. We write $P(t)=t^d+a_{d-1} t^{d-1}+\dots+ a_2 t^2+a_1 t$ and let $\Gamma_0$ be a large number depending only on $d$, say $\Gamma_0=2^{100 d!}$. 
 The precise value of $\Gamma_0$ is irrelevant. Define
\begin{equation*}
\mathcal{J}_{r}=\{k\in \Z: |a_{r} (2^{k})^{r}|> \Gamma_0|a_{r'} (2^{k})^{r'}| \text{ for every } r'\neq r\}
\end{equation*}
for $r=1,\dots,d$ and similarly,
\begin{equation*}
\mathcal{J}_{1,r}=\mathcal{J}_1\cap \{k\in \Z: |a_{r} (2^{k})^{r}|> \Gamma_0|a_{r'} (2^{k})^{r'}| \text{ for every } r'\not\in\{1,r\}\}
\end{equation*}
for $r=2,\dots,d$.
Roughly speaking, $\mathcal{J}_r$ can be understood as the set of dyadic scales $k$, where the $r$th power monomial dominates the behavior of the polynomial (and its derivatives).
We further denote
\begin{equation*}
\mathcal{J}_{\mathrm{good}}=\bigcup_{2\le r\le d} \mathcal{J}_{r}\cup \mathcal{J}_{1,r}\quad\text{and}\quad\mathcal{J}_{\mathrm{bad}}=\Z\setminus \mathcal{J}_{\mathrm{good}}.
\end{equation*}
Then the following variant of a lemma of Li and Xiao \cite{LiXiao} holds.
\begin{lem}\label{bad-set}
We have 
\begin{equation}\label{bad-small}
|\mathcal{J}_{\mathrm{bad}}| \le \Gamma_{d}.
\end{equation}
Here $\Gamma_d$ is a constant that depends only on $d$.
\end{lem}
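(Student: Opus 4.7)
The plan is to exhibit $\mathcal{J}_{\mathrm{bad}}$ as contained in a union of boundedly many short intervals of scales, each corresponding to a near-tie between two monomial terms of $P$. For $r = 1, \ldots, d$ set $A_r(k) := |a_r|\, 2^{kr}$ (with the convention $a_d = 1$), so that the weights appearing in the definitions of $\mathcal{J}_r$ and $\mathcal{J}_{1,r}$ are exactly the $A_r(k)$. After dropping any vanishing coefficients from the discussion (which only enlarges $\mathcal{J}_{\mathrm{good}}$, hence only shrinks $\mathcal{J}_{\mathrm{bad}}$), I assume every $a_r$ is nonzero. The key elementary observation is that for distinct $r, r' \in \{1,\ldots,d\}$,
\[
\log_2 \frac{A_r(k)}{A_{r'}(k)} \;=\; \log_2 \frac{|a_r|}{|a_{r'}|} + (r-r')\,k
\]
is affine in $k$ with nonzero integer slope, so the set $B_{r,r'} := \{k \in \mathbb{Z} : \Gamma_0^{-1} \le A_r(k)/A_{r'}(k) \le \Gamma_0\}$ consists of at most $\frac{2\log_2 \Gamma_0}{|r-r'|} + 1 \lesssim_d 1$ consecutive integers.

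It therefore suffices to prove the inclusion
\[
\mathcal{J}_{\mathrm{bad}} \;\subseteq\; \Big(\bigcup_{\substack{r,r' \in \{2,\ldots,d\} \\ r \neq r'}} B_{r,r'}\Big) \cup \Big(\bigcup_{r \in \{2,\ldots,d\}} B_{1,r}\Big),
\]
because this is a union of $O_d(1)$ sets, each of cardinality $\lesssim_d 1$. To establish the inclusion, I would case-split according to whether some $r \in \{2,\ldots,d\}$ is the strict maximum within $\{A_2(k), \ldots, A_d(k)\}$ by a factor $\Gamma_0$. In Case A (no such index), the top two values among $A_2(k), \ldots, A_d(k)$ are within a factor $\Gamma_0$ of each other, placing $k$ in some $B_{r,r'}$ with $r, r' \ge 2$. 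In Case B, letting $r^*$ denote the strict max within $\{A_2,\ldots,A_d\}$, the only way $k$ can avoid $\mathcal{J}_{r^*}$ is via the competitor $r' = 1$, forcing $A_{r^*}(k) \le \Gamma_0 A_1(k)$; meanwhile $k \notin \mathcal{J}_{1, r^*}$ combined with the dominance of $r^*$ in $\{2,\ldots,d\}$ forces $k \notin \mathcal{J}_1$, hence $A_1(k) \le \Gamma_0 A_{r'}(k)$ for some $r' \ge 2$. Chasing the inequalities (using $A_{r'}(k) < \Gamma_0^{-1} A_{r^*}(k)$ for $r' \neq r^*$) pins this on $r' = r^*$, so $k \in B_{1, r^*}$.

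The main obstacle is the bookkeeping in this case analysis: unpacking exactly which of the conditions in the definitions of $\mathcal{J}_r$ and $\mathcal{J}_{1,r}$ can fail when $k$ lies in none of them, and tracing each failure back to a single near-tie $B_{r,r'}$. Once the above inclusion is verified, the quantitative bound follows immediately from the trivial geometric fact that two exponentials with distinct integer rates can only remain within a bounded factor of one another on a window of $O_d(1)$ integer scales, so the final constant $\Gamma_d$ can be taken to be any sufficiently large multiple of $\log_2 \Gamma_0$, which depends only on $d$.
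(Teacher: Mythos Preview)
Your proof is correct and follows the same underlying idea as the paper: the bad scales are precisely those at which two monomial terms are of comparable magnitude, and each such near-tie set $B_{r,r'}$ (the paper's $\mathcal{J}(r',r'')$) contains only $O_d(1)$ integers. The paper organizes this a touch more modularly by first proving the auxiliary claim $|\Z \setminus \bigcup_{1\le r\le d} \mathcal{J}_r| \le \Gamma_d$ and then applying it once to $P$ and once to $P(t)-a_1 t$, whereas you unwind both applications into a single direct case analysis on $\mathcal{J}_{\mathrm{bad}}$; the content is the same.
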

\begin{proof}
This lemma is a slight variant of Lemma 2.1 in \cite{LiXiao}. We include the proof for the sake of completeness. The claim is that 
\begin{equation}\label{bad-small-3.5}
|\Z\setminus \cup_{1\le r\le d}\mathcal{J}_r|\le \Gamma_d.
\end{equation}
Estimate \eqref{bad-small} then follows from applying this estimate first to the polynomial $P$ and then to the polynomial $t\mapsto P(t)-a_1 t$.
To prove this estimate, we define $\mathcal{J}(r', r'')$ to be the collection of integers $k$ with
\begin{equation*}
|a_{r'}| 2^{r' k}\Gamma_0\ge |a_{r''}|2^{r'' k}\ge |a_{r'}| 2^{r' k}\Gamma_0^{-1}.
\end{equation*}
It is not difficult to see that $|\mathcal{J}(r', r'')|\le 4\Gamma_0$. Moreover, 
\begin{equation*}
(\Z\setminus \bigcup_{1\le r\le d}\mathcal{J}_r) \subset \bigcup_{1\le r'< r''\le d} \mathcal{J}_{r', r''}.
\end{equation*}
This proves \eqref{bad-small-3.5} with $\Gamma_d=4 d^2 \Gamma_0$.
\end{proof}
The good and bad sets for the rescaled polynomial $t\mapsto 2^{-dj} P(2^j t)$ are simply given by shifts of the good and bad sets for $P$. Accordingly, we define
\begin{equation*}
\mathcal{J}_r^{(j)} = \mathcal{J}_r - j
\end{equation*}
and similarly $\mathcal{J}^{(j)}_{1,r}, \mathcal{J}_\mathrm{good}^{(j)}$ and $\mathcal{J}_\mathrm{bad}^{(j)}$.

Now we construct the admissible sets $\mathcal{E}$ and $\Lambda$.
The set $\mathcal{E}$ will be chosen as a suitable subset  of
\begin{equation*}
\mathcal{E}_0=\{2\Gamma_d j: j\in\N\},
\end{equation*}
where $\Gamma_d$ is the constant from Lemma \ref{bad-set}.
We claim that the set 
\begin{equation*}
\Lambda_0=\N \setminus \bigcup_{j\in \mathcal{E}_0} -\mathcal{J}^{(j)}_{\mathrm{bad}}.
\end{equation*}
is admissible. Indeed, looking at residue classes modulo $2\Gamma_d$, we note that the cardinality of
\begin{equation*}
\Big(\bigcup_{j\in \mathcal{E}_0} -\mathcal{J}^{(j)}_{\mathrm{bad}} \;\mathrm{mod}\;2\Gamma_d\Big) =\Big( -\mathcal{J}_{\mathrm{bad}} \;\mathrm{mod}\; 2\Gamma_d\Big)
\end{equation*}
is at most $\Gamma_d$ by Lemma \ref{bad-set}. This proves the claim. By construction, we have that 
\begin{equation*}-\ell\in\mathcal{J}_{\mathrm{good}}^{(j)}
\end{equation*}
holds for every $j\in \mathcal{E}_0$ and $\ell\in \Lambda_0$. That is, the polynomial $t\mapsto 2^{-dj}P(2^j t)$ behaves like a monomial on the annulus $|t|\in[2^{-\ell}, 2^{-\ell+1}]$.

For $r=1, \dots, d$, we let $b_{r}$ be the unique integer such that\footnote{We assume without loss of generality that $a_r\not=0$.} 
\begin{equation}\label{eqn:brdef}
|a_{r}|\in [2^{b_{r}},2^{b_{r}+1}).
\end{equation}
For reasons that will become clear in Section \ref{sec:main} (see \eqref{0221e2.18} and the discussion below \eqref{eqn:quotientawayfrom1}) we require the condition
\begin{equation}\label{eqn:m0islarge}
|b_{r}+(r-1)(j-\ell)|\ge 10^{10 d} \text{ for every } 2\le r\le d.
\end{equation} 
to hold. 
Now we just pick
\begin{equation*} 0=j_0<\ell_0<j_1<\ell_1<\dots
 \end{equation*}
with $j_i\in\mathcal{E}_0$, $\ell_i\in\Lambda_0$ such that \eqref{eqn:m0islarge} holds (this is possible by looking at residue classes modulo $2\Gamma_d$ again). Then we set $\mathcal{E}=\{j_0,j_1,\dots\}$ and $\Lambda=\{\ell_0,\ell_1,\dots\}$.

\section{The main argument}\label{sec:main}

In this section we prove Lemma \ref{main-lemma}. Let $(j,\ell)\in\mathcal{E}\times\Lambda$, where $\mathcal{E}$ and $\Lambda$ are the sets constructed in the previous section. Then there exists $1\le d_0\le d$ such that $-\ell\in \mathcal{J}^{(j)}_{d_0}$. Thus we have by definition that
\begin{equation}\label{eqn:d0dominating}
|a_{d_0} (2^{j-\ell})^{d_0}|> \Gamma_0 |a_{r} (2^{j-\ell})^{r}|
\end{equation}
for all $r\not=d_0$. In other words, the $d_0$th order monomial dominates the absolute value of $P(x)$ at the scale $x\approx 2^{j-\ell}$. Note that the same automatically holds for all derivatives of the polynomial.
Recall the definition of $b_r$ from \eqref{eqn:brdef}. By \eqref{eqn:d0dominating} and since $a_d=1$, we have the lower bound
\begin{equation}\label{lower-bound-bd}
b_{d_0}\ge (j-\ell)(d-d_0).
\end{equation}
Notice that if we are in the case $m\le 100 d \ell$, then the desired estimate \eqref{main-estimate-3} will follow simply from Minkowski's and H\"older's inequalities, since the right hand side of \eqref{main-estimate-3} is allowed to depend on $\ell$ as indicated in Lemma \ref{main-lemma}.

In the rest of this section, we always assume that $m> 100 d\ell$.
The claim in Lemma \ref{main-lemma} about the dependence of the constant on $\ell$ and $M$ is easily seen by an inspection of the proof. In order to simplify notation, we will not make any further comments on this issue and merely indicate the dependence of inequalities on $\ell$ by writing $\lesssim_\ell$ and similarly for $M$.
By $\pr{k} f$ we denote the frequency projection defined by $$\widehat{\pr{k} f}(\xi)=\widehat{f}(\xi) \mathbbm{1}_{k}(\xi),$$
where $\mathbbm{1}_k(\xi)=\mathbbm{1}_{\{|\xi|\in 2^k[1,2)\}}$. Then the quantity we need to bound can be written as the $L^1([0,1])$ norm of
\begin{equation}\label{0210e2.7}
\sum_{k\in \Z}  \int_{\R} (\pr{k}f)(x+2^{-(d-1)j}{t}) g(x+2^{-dj}P(2^j t)) \tau_\ell(t)dt   .
\end{equation}
Passing to the Fourier side and looking at potential critical points of the phase we expect the main contribution to come from the case when
\begin{equation*}
k=b_{d_0} + (d_0-1)(j-\ell) + m.
\end{equation*}
This motivates us to write
\begin{equation}\label{0221e2.18}
m_0=b_{d_0}+(d_0-1)(j-\ell).
\end{equation}
Observe that \eqref{lower-bound-bd} implies the following lower bound on $m_0$:
\begin{equation}\label{eqn:lowerbdm0}
m_0\ge (d-1)(j-\ell).
\end{equation}
Now we write \eqref{0210e2.7} as 
\begin{equation*}
\sum_{k\in \Z}  \int_{\R} \pr{m+m_0+k}f (x+2^{-(d-1)j}{t}) g(x+2^{-dj}P(2^j t)) \tau_\ell(t)dt.
\end{equation*}
Let us first consider the case that $|k|$ is large, say greater than $10^d$. Due to the lack of critical points in the phase we do not expect a large contribution from this term. We dualize using $h\in L^\infty([0,1])$ and consider
\begin{equation*}
\sum_{|k|> 10^d}  \int_0^1 \int_{\R} \pr{m+m_0+k}f (x+2^{-(d-1)j}{t}) g(x+2^{-dj}P(2^j t)) \tau_\ell(t) h(x) dtdx.
\end{equation*}
By Fourier inversion, this can be written as (up to a universal constant)
\begin{equation*}\sum_{|k|>10^d} \iint_{\R^2} \widehat{f}(\xi) \widehat{g}(\eta) \mathbbm{1}_{m+m_0+k}(\xi) \widehat{h}(-\xi-\eta) \left(\int_{\R} e^{i\xi 2^{-(d-1)j-\ell}{t}+i\eta 2^{-dj}P(2^{j-\ell} t)} \tau(t)dt \right) d(\xi,\eta) .
\end{equation*}
Note that the $t$-derivative of the phase in the integral over $t$ is $\gtrsim 2^{-\ell d} 2^m$ (actually it is even larger for positive $k$, but we don't need to make use of any decay in $k$). This follows using \eqref{eqn:d0dominating} and \eqref{eqn:lowerbdm0} and that $|k|$ is large.
Therefore, integration by parts and an application of the Cauchy-Schwarz inequality to the integration in $(\xi,\eta)$ yields that the previous display is bounded as 
\begin{equation*}\begin{split}
\lesssim_{\ell} 2^{-m} & \left(\iint_{\R^2} |\widehat{g}(\eta)|^2 \sum_{|k|>10^d} \left|\widehat{f}(\xi)\mathbbm{1}_{m+m_0+k}(\xi)\right|^2 d(\xi,\eta)\right)^{1/2}\\
& \times \left(\iint_{\R^2} |\widehat{h}(-\xi-\eta)|^2 \mathbbm{1}_m(\eta) d(\xi,\eta)\right)^{1/2}.
\end{split}
\end{equation*}
Here we have used orthogonality of the functions $\mathbbm{1}_k$. The previous display is
\begin{equation*}\le 2^{-\frac{m}2} \|f\|_2 \|g\|_2 \|h\|_\infty,
\end{equation*}
where we have estimated $\|\widehat{h}\|_2=\|h\|_{L^2([0,1])}\le \|h\|_\infty$.

Thus it remains to treat the case when $|k|$ is small (bounded by a constant depending only on $d$). Without loss of generality we set $k=0$ to simplify notation. So for the remainder of this section, we will be concerned with the verification of the inequality
\begin{equation*}
\Big\| \int_{\R} f(x+2^{-(d-1)j}t) g(x+2^{-dj}P(2^j t)) \tau_\ell(t)dt\Big\|_{L^1([0, 1])} \lesssim_\ell 2^{-\gamma m} \|f\|_2 \|g\|_2
\end{equation*}
for some positive $\gamma>0$, $\widehat{f}$ supported in $2^{m+m_0} \leq |\xi| \leq  2^{m+m_0+1}$ and $\widehat{g}$ supported in $2^m\le |\xi|\le 2^{m+1}$. 
We first perform a few preliminary manipulations in order to streamline the argument. Changing variables $x\rightarrow 2^{-m-m_0}x$ we see that it suffices to show 
\begin{equation*}\begin{split}
& \Big\| \int_{\R} f(2^{-m-m_0}x+2^{-(d-1)j}t) g(2^{-m-m_0}x+2^{-dj}P(2^j t)) \tau_\ell(t)dt\Big\|_{L^1([0, 2^{m_0+m}])}\\
& \lesssim_\ell 2^{m+m_0} 2^{-\gamma m} \|f\|_2 \|g\|_2,
\end{split}
\end{equation*}
By a rescaling of $f$ and $g$ it suffices to show
\begin{equation*} \Big \|  \int_{\R} {f}(x +\lambda t)g(2^{-m_0}x+\lambda Q(t)) \tau(t)dt \Big \|_{L^1([0,2^{m_0+m}])}  \lesssim_\ell 2^{\frac{m_0}{2}} 2^{-\gamma m }  \|f\|_2\|g\|_2,
\end{equation*}
where $\widehat{f}$ and $\widehat{g}$ are supported in the annulus $1\le |\xi|\le 2$ and we have set
\begin{equation}\label{eqn:lambdaQdef}
\lambda = 2^{m+m_0-(d-1)j-\ell}\quad\text{and}\quad
Q(t)= 2^{-m_0+\ell-j} P(2^{j-\ell} t).
\end{equation}
By \eqref{eqn:lowerbdm0} we have
\begin{equation}\label{eqn:lambdaest}
\lambda \gtrsim_\ell 2^m. 
\end{equation}
Also note that $Q$ is well normalized in the sense that $\|Q\|_{C^d([1/2,2])}\approx 1$. Dualizing the $L^1([0,2^{m_0+m}])$ norm using $h\in L^\infty([0,2^{m_0+m}])$ it is enough to show that
\begin{equation}\label{dualized-bound-linf}
\Big| \int_\R \int_{\R} {f}(x +\lambda t)g(2^{-m_0}x+\lambda Q(t)) h(x) \tau(t) dtdx \Big| \lesssim_\ell 2^{\frac{m_0}2}2^{-\gamma m} \|f\|_2\|g\|_2 \|h\|_\infty
\end{equation}
By H\"older's inequality it suffices to verify that the trilinear estimate
\begin{equation}\label{dualized-bound}
\Big| \int_\R \int_{\R} {f}(x +\lambda t)g(2^{-m_0}x+\lambda Q(t)) h(x) \tau(t) dtdx \Big| \lesssim_\ell 2^{-\gamma m} 2^{-\frac12 m} \|f\|_2\|g\|_2 \|h\|_2
\end{equation}
holds for $f,g$ with Fourier support in $1\le |\xi|\le 2$.
Applying the Fourier inversion formula to $f$ and $g$, the integral on the left hand side of the previous display can be written as (up to a universal constant)
\begin{equation*}\iint_{\R^2} \widehat{f}(\xi) \widehat{g}(\eta) \widehat{h}(-\xi-2^{-m_0}\eta) \left( \int_{\R} e^{i \lambda (t\xi+Q(t)\eta)} \tau(t) dt \right) d(\xi,\eta) .
\end{equation*}
We denote the phase function of the integral in $t$ by
\begin{equation}\label{eqn:PhiDef} \Phi_{\xi,\eta} (t) =  t \xi + \eta Q(t).
\end{equation}

In the following we will always assume that the equation
\begin{equation*} \Phi'_{\xi,\eta}(t_c) = 0
\end{equation*}
has a unique solution $t_c=t_c(\xi,\eta)\in (1/2, 2)$. In the case of multiple solutions, each of them can be isolated by adding an appropriate cutoff function in $t$ (which we silently include into $\tau$) and each is then treated in the exact same way.
If on the other hand there is no such solution, we can integrate by parts and apply the Cauchy-Schwarz inequality similarly as above to obtain the desired bound (also see the discussion below \eqref{eqn:statphase}). Let us denote the dual phase function by
\begin{equation}\label{psi-function}
 \Psi(\xi,\eta) = \Phi_{\xi,\eta}(t_c) = t_c \xi + \eta Q(t_c). 
\end{equation}
Note that the existence of a critical point depends on the variables $\xi$ and $\eta$ (in the case $d_0>1$ it actually depends only on $\xi/\eta$). 
Before we proceed we state an incarnation of the stationary phase principle that will be invoked various times during the argument.
\begin{lem}\label{lemma:stationaryphase}
Suppose that there exists a unique $t_0\in(1/2,2)$ such that $\phi'(t_0)=0$. Assume $\phi''(t_0)\not=0$ and that $\tau$ is supported in $[1/2,2]$. Then
\begin{equation*} \int_\R e^{i\lambda\phi(t)} \tau(t) dt = \lambda^{-1/2} e^{i\lambda\phi(t_0)} (c |\phi''(t_0)|^{-\frac12} \tau(t_0) + R(\lambda)), 
\end{equation*}
with $R^{(r)}(\lambda)=O(\lambda^{-r-\frac12})$. Here, $c$ is a universal constant and the estimates of the remainder term $R$ depend only on finitely many derivatives of $\phi$ and $\tau$.
\end{lem}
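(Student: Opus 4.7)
\textbf{Proof proposal for Lemma \ref{lemma:stationaryphase}.} My plan is to run the classical two-step stationary phase argument: isolate the critical point with a smooth partition of unity, handle the piece away from $t_0$ by repeated integration by parts, and handle the piece near $t_0$ via a Morse-type change of variables reducing to a Fresnel integral. The dependence of the remainder only on finitely many derivatives of $\phi$ and $\tau$ is automatic from this scheme, since integration by parts involves only $\tau,\phi',\phi'',\ldots$ up to a bounded order, and Morse's lemma only uses $\phi$ up to some finite order near $t_0$.

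First I would fix $\delta>0$ small enough that on $I_\delta=(t_0-\delta,t_0+\delta)\cap[1/2,2]$ the second derivative $\phi''$ is bounded away from zero and $\phi'$ vanishes only at $t_0$, and choose a smooth partition $1=\chi_1+\chi_2$ on $[1/2,2]$ with $\chi_1$ supported in $I_\delta$ and $\chi_1\equiv 1$ on a smaller neighborhood of $t_0$. Write $\tau=\tau_1+\tau_2$ with $\tau_i=\tau\chi_i$. On $\mathrm{supp}(\tau_2)$ we have $|\phi'(t)|\gtrsim 1$, so the operator $L g=\frac{1}{i\lambda\phi'(t)}\frac{d}{dt}g$ satisfies $L(e^{i\lambda\phi(t)})=e^{i\lambda\phi(t)}$, and $N$-fold integration by parts gives
\begin{equation*}
\int_{\R} e^{i\lambda\phi(t)}\tau_2(t)\,dt = O_N(\lambda^{-N}).
\end{equation*}
Differentiating the left side $r$ times in $\lambda$ brings down factors of $(i\phi(t))^r$, which are bounded on the support of $\tau_2$, so the same bound holds for every $\lambda$-derivative. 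This piece is absorbed into $R$.

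For the piece near $t_0$, I would apply Morse's lemma to obtain a $C^\infty$ diffeomorphism $t=t(u)$ with $t(0)=t_0$, $t'(0)=|\phi''(t_0)|^{-1/2}$, such that
\begin{equation*}
\phi(t(u)) = \phi(t_0) + \tfrac{1}{2}\mathrm{sgn}(\phi''(t_0))\,u^2
\end{equation*}
on a neighborhood of $0$. Setting $\rho(u)=\tau_1(t(u))t'(u)$ (a compactly supported smooth function with $\rho(0)=|\phi''(t_0)|^{-1/2}\tau(t_0)$) and changing variables, the $\tau_1$-piece becomes
\begin{equation*}
e^{i\lambda\phi(t_0)}\int_{\R} e^{i\lambda\varepsilon u^2/2}\rho(u)\,du, \qquad \varepsilon:=\mathrm{sgn}(\phi''(t_0)).
\end{equation*}
Splitting $\rho(u)=\rho(0)\eta(u)+(\rho(u)-\rho(0)\eta(u))$ for a fixed smooth cutoff $\eta$ equal to $1$ on $\mathrm{supp}(\rho)$, the first term is a Gaussian integral with known Fresnel value $\sqrt{2\pi/\lambda}\,e^{i\varepsilon\pi/4}\rho(0)$ up to an exponentially small correction in $\lambda$, which contributes the stated main term with $c=\sqrt{2\pi}\,e^{i\varepsilon\pi/4}$. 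The second term $\sigma(u):=\rho(u)-\rho(0)\eta(u)$ vanishes to first order at $0$, so writing $\sigma(u)=u\tilde\sigma(u)$ and using $ue^{i\lambda\varepsilon u^2/2}=\frac{1}{i\lambda\varepsilon}\frac{d}{du}e^{i\lambda\varepsilon u^2/2}$, one integration by parts yields an integral of the same form times $\lambda^{-1}$; combined with the $\lambda^{-1/2}$ out front this is $O(\lambda^{-3/2})$, well within the claim $R(\lambda)=O(\lambda^{-1/2})$.

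Finally, to control $R^{(r)}$, I would note that $\frac{d^r}{d\lambda^r}\int e^{i\lambda\phi(t)}\tau(t)dt = \int (i\phi(t))^r e^{i\lambda\phi(t)}\tau(t)dt$, so applying the same analysis with $\tau$ replaced by $(i\phi)^r\tau$ gives an asymptotic of the same shape. Matching with the Leibniz expansion of $\frac{d^r}{d\lambda^r}[\lambda^{-1/2}e^{i\lambda\phi(t_0)}(c|\phi''(t_0)|^{-1/2}\tau(t_0)+R(\lambda))]$ (where $\frac{d^s}{d\lambda^s}\lambda^{-1/2}=O(\lambda^{-s-1/2})$ and $\frac{d^s}{d\lambda^s}e^{i\lambda\phi(t_0)}$ is bounded) forces $R^{(r)}(\lambda)=O(\lambda^{-r-1/2})$. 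The main obstacle I anticipate is being careful with the Morse change of variables so that the constants $\rho(0),t'(0)$ combine into the stated factor $|\phi''(t_0)|^{-1/2}\tau(t_0)$ with the correct universal $c$; this is bookkeeping but easy to botch.
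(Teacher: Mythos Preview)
Your proposal is correct and follows the classical stationary phase argument. The paper itself does not give a self-contained proof: it simply declares the lemma standard and refers to \cite[Ch.~VIII.1, Prop.~3]{Stein} together with integration by parts, which is exactly the mechanism you spell out (isolate the critical point, integrate by parts away from it, apply a Morse change of variables near it, and reduce to a Fresnel integral). So your route is not different from the paper's---you have just written out what the citation to Stein encodes, including the bookkeeping for the $\lambda$-derivatives of $R$.
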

The proof is standard and follows from \cite[Ch. VIII.1, Prop. 3]{Stein}, combined with appropriate integration by parts.

We now distinguish three cases. In the first two cases we assume that the dominating monomial is nonlinear, i.e. $d_0\ge 2$. In the third case we assume that the linear term is dominating, that is, $d_0=1$.

\subsection{Case I:  $d_0\ge 2$ and $|m_0|\le (1-\kappa)m$.} 
Here $\kappa$ is a small, positive universal constant that is to be determined later. In this case we follow the approach of \cite[Section 5]{Li}, and use the $TT^*$ method to obtain \eqref{dualized-bound}. 

As a consequence of Lemma \ref{lemma:stationaryphase} we have
\begin{equation}\label{eqn:statphase}
\int_{\R} e^{i \lambda\Phi_{\xi,\eta}(t)} \tau(t) dt = \lambda^{-1/2} e^{i\lambda \Psi(\xi,\eta)} a(\xi,\eta)  + R_{\xi,\eta}(\lambda),
\end{equation}
where $a(\xi, \eta)$ is a smooth and compactly supported function and the remainder term $R_{\xi,\eta}$ satisfies
\begin{equation*} |R_{\xi,\eta}(\lambda)|\lesssim \lambda^{-1}, 
\end{equation*}
where the implied constant depends only on $d$. In the case that $\xi,\eta$ are such that there exists no critical point (and therefore $\Psi(\xi,\eta)$ is not well-defined), we may set $a(\xi,\eta)=0$. As a consequence, that case is also handled by the remainder term. The reader should also keep this convention in mind for the remaining applications of stationary phase later in this section. We will not address this issue anymore from now on.

The function $a(\xi,\eta)$ depends on all our parameters, however does so in a harmless way. For instance, we have $|a(\xi,\eta)|\approx 1$ (and this information suffices for our purposes). The remainder term in \eqref{eqn:statphase} can be treated by an application of the Cauchy-Schwarz inequality in $(\xi,\eta)$. Indeed, we obtain
\begin{equation}\label{eqn:statphaseerr}
\left|\iint_{\R^2} \widehat{f}(\xi)\widehat{g}(\eta)\widehat{h}(-\xi-2^{-m_0}\eta) R_{\xi,\eta}(\lambda) d(\xi,\eta)\right|\lesssim_\ell 2^{-m} \|f\|_2 \|g\|_2 \|h\|_2.
\end{equation}
Here we used that $\lambda^{-1}\lesssim_\ell 2^{-m}$. Turning our attention to the main term, it now remains prove that
\begin{equation}\label{mainest:statphase}
\Big| \iint_{\R^2} \widehat{f}(\xi) \widehat{g}(\eta) e^{i \lambda \Psi(\xi,\eta)} a(\xi,\eta) \widehat{h}(-\xi-2^{-m_0}\eta) d(\xi,\eta) \Big| \lesssim_\ell 2^{-\gamma m} \|f\|_2 \|g\|_2 \|h\|_2.
\end{equation}
Changing variables from $\xi$ to $\xi-2^{-m_0}\eta$ and applying the Cauchy-Schwarz inequality to separate the function $h$, we see that it suffices to show
\begin{equation}\label{mainest:hoelder}
\Big\| \int_{\R} \widehat{f}(\xi-2^{-m_0} \eta) \widehat{g}(\eta) a(\xi-2^{-m_0} \eta, \eta) e^{i \lambda \Psi(\xi-2^{-m_0}\eta,\eta)} d\eta\Big\|_{L_\xi^2(\R)} \lesssim_\ell 2^{-\gamma m} \|f\|_{2} \|g\|_{2}.
\end{equation}
Expanding the square of the $L^2$ norm on the left hand-side gives   
\begin{equation*}
\begin{split}
 \int_\R & \Big( \int_\R \widehat{f}(\xi-2^{-m_0} \eta) \widehat{g}(\eta) a(\xi-2^{-m_0} \eta, \eta) e^{i \lambda \Psi(\xi-2^{-m_0}\eta,\eta)} d\eta \Big )\\
 & \times \Big( \int_\R \overline{\widehat{f}(\xi-2^{-m_0} \eta')\widehat{g}(\eta') a(\xi-2^{-m_0} \eta', \eta')} e^{-i \lambda \Psi(\xi-2^{-m_0}\eta',\eta')} d\eta' \Big ) d\xi.
\end{split}
\end{equation*}
The change of variables
\begin{equation}\label{eqn:TTstarchgofvar}
\eta'\to \eta-\alpha,\quad  \xi-2^{-m_0}\eta\to \xi
\end{equation}
transforms the left hand-side of \eqref{mainest:hoelder} into
\begin{equation*}\iiint_{\R^3}  F_\alpha(\xi) G_\alpha(\eta) \chi_\alpha(\xi,\eta) e^{i \lambda \Xi_\alpha(\xi,\eta)} d(\eta, \xi, \alpha),
\end{equation*}
where
\begin{align}\label{eqn:TTstarnotation}
F_\alpha(\xi) &= \widehat{f}(\xi)\overline{\widehat{f}(\xi+2^{-m_0}\alpha)}\\ \nonumber
G_\alpha(\eta) &= \widehat{g}(\eta)\overline{\widehat{g}(\eta-\alpha)},\\ \nonumber
\chi_\alpha(\xi,\eta) &= a(\xi, \eta)\overline{a(\xi+2^{-m_0}\alpha, \eta-\alpha)},\;\text{and}\\ \nonumber
\Xi_{\alpha}(\xi, \eta) &= \Psi(\xi,\eta)- \Psi(\xi+2^{-m_0}\alpha,\eta-\alpha).
\end{align}
We split the integration in $\alpha$ over the regions $|\alpha| \leq \alpha_0$ and $|\alpha| \geq \alpha_0$, where $\alpha_0>0$ is to be determined later. If $|\alpha| \leq \alpha_0$ we simply use the triangle inequality and Cauchy-Schwarz in $(\xi,\eta)$ to estimate
\begin{equation}\label{mainest:alphasmall}
\int_{|\alpha| \leq \alpha_0} \iint_{\R^2} |F_\alpha(\xi) G_\alpha(\eta) \chi_\alpha(\xi,\eta)| d(\eta,\xi) d\alpha \lesssim \alpha_0 \|f\|_{2}^2\|g\|_{2}^2.
\end{equation}
If $|\alpha| \geq \alpha_0$, the idea is to exploit the cancellation caused by the oscillation of the phase function $\Xi_\alpha$.
Our claim is that since $|m_0|$ is large (recall \eqref{eqn:m0islarge}) we have
\begin{equation}
\label{eqn:mixedderivest}
|\partial_\xi\partial_\eta \Xi_{\alpha}(\xi,\eta)|\gtrsim \alpha.
\end{equation}
By implicit differentiation we have
\begin{equation*} \partial_\eta t_c = -\frac{Q'(t_c)}{\eta Q''(t_c)} 
\end{equation*}
Moreover we compute
\begin{equation*} \partial_\xi \Psi(\xi,\eta) = t_c + \partial_\xi t_c\cdot \xi + \partial_\xi t_c \cdot \eta Q'(t_c) = t_c,
\end{equation*}
and as a consequence,
\begin{equation*} \partial_\eta \partial_\xi \Psi(\xi,\eta) = \partial_\eta t_c = -\frac{Q'(t_c)}{\eta Q''(t_c)}. 
\end{equation*}
Let us set $H(\xi,\eta)=\partial_\eta \partial_\xi \Psi(\xi,\eta)$.
By the mean value theorem we have
\begin{equation}\label{eqn:mixedderivmvt} H(\xi,\eta) - H(\xi+2^{-m_0}\alpha,\eta-\alpha) = \nabla H (\tilde{\xi},\tilde{\eta}) \cdot (-2^{-m_0}\alpha,\alpha),
\end{equation}
where $(\tilde{\xi},\tilde{\eta})$ is some convex combination of $(\xi,\eta)$ and $(\xi+2^{-m_0}\alpha,\eta-\alpha)$. We claim that
\begin{equation}\label{eqn:Hbound}
|\partial_\xi H(\xi,\eta)|\approx_d 1\,\,\text{and}\,\, |\partial_\eta H(\xi,\eta)|\approx_d 1.
\end{equation}
Indeed, we compute
\begin{equation*} 
\partial_\xi H(\xi,\eta) = \frac{1}{\eta^2 Q''} \left( 1 - \frac{Q''' Q'}{(Q'')^2}\right)
\,\,\text{and}\,\,
\partial_\eta H(\xi,\eta) = \frac{Q'}{\eta^2 Q''} \left( 2 - \frac{Q' Q'''}{(Q'')^2}\right).
\end{equation*}
It is clear that $|\nabla H(\xi,\eta)|\lesssim 1$. To obtain the lower bounds, we need to study the fraction
\begin{equation*} \frac{Q' Q'''}{(Q'')^2} = \frac{P'(2^{j-\ell}t_c)P'''(2^{j-\ell}t_c)}{P''(2^{j-\ell}t_c)^2}
\end{equation*}
Since $2^{j-\ell}t$ is in the range where the $d_0$th monomial dominates (i.e. \eqref{eqn:d0dominating} holds), we have that this is bounded by
\begin{equation*} \frac{(d_0+h)(d_0(d_0-1)(d_0-2)+h)}{(d_0(d_0-1)-h)^2} 
\end{equation*}
To see this recall that if a monomial dominates the absolute value of the polynomial at a certain scale, then it also dominates the absolute value of the derivatives of the polynomial at that scale. Here, $h>0$ can be made as small as we please by making $\Gamma_0$ larger, if necessary. 
By continuity, we can make $h$ small enough such that the above fraction is only $\frac1{100(d_0-1)}$ away from
\begin{equation*} \frac{d_0-2}{d_0-1} 
\end{equation*}
Thus,
\begin{equation}\label{eqn:quotientawayfrom1}
1-\frac{Q' Q'''}{(Q'')^2}\ge  1 - \frac{d_0-2}{d_0-1}-\frac1{100(d_0-1)} = \frac{99}{100}\cdot \frac{1}{d_0-1}\gtrsim_d 1.\end{equation}
We have therefore proven \eqref{eqn:Hbound}. Recall that we have chosen $j\in\mathcal{E}$ and $\ell\in\Lambda$. Thus, $|m_0|$ is large in the sense that \eqref{eqn:m0islarge} holds. Using this we obtain from \eqref{eqn:mixedderivmvt} that \eqref{eqn:mixedderivest} holds. This is because in the case that $m_0$ is large, the inner product on the right hand side of \eqref{eqn:mixedderivmvt} is dominated by the second component and if $-m_0$ is large, then it is dominated by the first component. Now we use the following well known fact.
\begin{lem}[H\"ormander]\label{lemma_hormander}
Let $a,\varphi$ be smooth functions in $\R^2$, $\varphi$ real-valued and $\lambda>1$. Also denote
\begin{equation*} T_\lambda f(x) = \int_\R e^{i\lambda\varphi(x,y)} a(x,y) f(y) dy.
\end{equation*}
Assume that $\partial_x\partial_y \varphi(x,y)\not= 0$ in the support of $a$. Then we have
\begin{equation*} |\langle T_\lambda f, g\rangle| \le C \lambda^{-1/2} \|f\|_2 \|g\|_2. 
\end{equation*}
\end{lem}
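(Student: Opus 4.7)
My plan is to prove this by the standard $TT^*$ method combined with non-stationary phase. By duality the claim is equivalent to $\|T_\lambda\|_{L^2\to L^2}\lesssim \lambda^{-1/2}$, and squaring this reduces the problem to showing $\|T_\lambda T_\lambda^*\|_{L^2\to L^2}\lesssim \lambda^{-1}$. The operator $T_\lambda T_\lambda^*$ has kernel
\begin{equation*}
K(x,x') = \int_\R e^{i\lambda[\varphi(x,y)-\varphi(x',y)]}\, a(x,y)\overline{a(x',y)}\, dy,
\end{equation*}
so everything reduces to a pointwise estimate on this one-dimensional oscillatory integral followed by Schur's test.

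The key step is to exploit the nondegeneracy of $\partial_x\partial_y\varphi$ to control the $y$-derivative of the phase. By the fundamental theorem of calculus,
\begin{equation*}
\partial_y\varphi(x,y)-\partial_y\varphi(x',y) = (x-x')\int_0^1 \partial_x\partial_y\varphi\bigl(x'+s(x-x'),y\bigr)\,ds.
\end{equation*}
On (a compact piece of) the support of $a$, the assumption $\partial_x\partial_y\varphi\neq 0$ combined with continuity gives a lower bound bounded away from zero with a definite sign; hence the displayed quantity has modulus $\gtrsim |x-x'|$, and its reciprocal is a smooth function of $y$ with uniform bounds on its derivatives. Iterating non-stationary phase integration by parts $N$ times in $y$ then yields
\begin{equation*}
|K(x,x')|\lesssim_N \min\bigl(1,\,(\lambda|x-x'|)^{-N}\bigr),
\end{equation*}
with the trivial bound by $1$ relying on compact support of $a$, which I assume in the standard formulation of the lemma.

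Finally, I would invoke Schur's test. Taking $N=2$ and splitting the $x'$-integral at $|x-x'|\approx \lambda^{-1}$ gives $\sup_x\int_\R |K(x,x')|\,dx'\lesssim \lambda^{-1}$, and symmetrically in $x$. Schur's test then produces $\|T_\lambda T_\lambda^*\|_{L^2\to L^2}\lesssim \lambda^{-1}$, which is the required estimate. The one technical point is ensuring that the non-stationary phase integration by parts can in fact be iterated $N$ times; the factorization above secures this by expressing $\partial_y(\text{phase})$ as $(x-x')$ times a smooth function bounded away from zero on the support of $a$, after which the iterative scheme is routine. A preliminary decomposition of $a$ into finitely many pieces may be needed so that $\partial_x\partial_y\varphi$ has a single sign on each, but this does not affect the conclusion.
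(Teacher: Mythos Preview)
Your proof is correct and follows essentially the same approach the paper indicates: the paper does not write out a proof but only remarks that ``the proof is simply by $TT^*$ and stationary phase,'' and your $TT^*$ argument with integration by parts on the kernel is precisely the standard execution of that sketch. Your added remarks on compact support of $a$ and on a preliminary decomposition to fix the sign of $\partial_x\partial_y\varphi$ are appropriate technical points that the paper leaves implicit.
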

This is a dualized version of the $L^2$ endpoint of \cite[Thm. 1.1]{Hor72}. The proof is simply by $TT^*$ and stationary phase. Applying this result to our situation we conclude
\begin{equation}
\begin{split} \label{mainest:alphabig}
& \Big| \int_{|\alpha| \geq \alpha_0} \int_{\R}\int_{\R}  
F_\alpha(\xi) G_\alpha(\eta) e^{i\lambda \Xi_{\alpha}(\xi,\eta)} d\eta d\xi d\alpha \Big |\\
& \lesssim_\ell  (2^m \alpha_0)^{-\frac{1}{2}}  \int_{|\alpha|\ge \alpha_0} \|F_\alpha\|_2  \|G_\alpha\|_2 d\alpha \lesssim_\ell  (2^m \alpha_0)^{-\frac{1}{2}} 2^{\frac{m_0}2} \|f\|_{2}^2\|g\|_{2}^2,
\end{split}
\end{equation}
where the last inequality follows by Cauchy-Schwarz applied to the integration in $\alpha$.
Thus, if $|m_0|\le (1-\kappa)m$ for some fixed small absolute constant $\kappa>0$, then by letting $\alpha_0=2^{-\frac{\kappa m}{2}}$ we see that our desired estimate \eqref{mainest:statphase} holds with $\gamma=\kappa/8$.

\subsection{Case II: $d_0\ge 2$ and $|m_0|>(1-\kappa)m$.}
Here we apply a $\sigma$-uniformity argument in the spirit of \cite{Li} and \cite{LiXiao}. Alternatively, one can also follow the approach of \cite{Lie} which does not use the concept of $\sigma$--uniformity. Before we start, let us briefly review the basic setup of $\sigma$-uniformity. 
\begin{defi}[$\sigma$-uniformity]
Let $\sigma\in (0, 1)$, $\mathcal{I}\subset \R$ a bounded interval and $\mathcal{U}(\mathcal{I})$ a non-trivial subset of $L^2(\mathcal{I})$ such that $\sup_{u\in\mathcal{U}(\mathcal{I})} \|u\|_2 < \infty$. A function $f\in L^2(\mathcal{I})$ is called \emph{$\sigma$-uniform in $\mathcal{U}(\mathcal{I})$} if 
\begin{equation*}
\big|\int_{\mathcal{I}} f(x)\overline{u(x)}dx\big|\le \sigma \|f\|_{L^2(\mathcal{I})}, \text{ for every } u\in \mathcal{U}(\mathcal{I}). 
\end{equation*}
\end{defi}
The main result on $\sigma$-uniformity is the following.
\begin{lem}[\cite{Li}]\label{lemma:sigmaunif}
Let $\mathcal{L}$ be a bounded sublinear functional from $L^2(\mathcal{I})$ to $\C$, and $S_{\sigma}$ be the set of all functions that are $\sigma$-uniform in $\mathcal{U}(\mathcal{I})$. Denote 
\begin{equation*}A_{\sigma}=\sup\{|\mathcal{L}(f)|/\|f\|_{L^2(\mathcal{I})}: f\in S_{\sigma}, f\neq 0\},
\end{equation*}
and 
\begin{equation*}K=\sup_{u\in \mathcal{U}(\mathcal{I})}\frac{|\mathcal{L}(u)|}{\|u\|_2}.
\end{equation*}
Then 
\begin{equation*}\|\mathcal{L}\|\le \max\{A_{\sigma}, 2\sigma^{-1} K\}.
\end{equation*}
\end{lem}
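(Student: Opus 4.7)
My plan is to proceed by a dichotomy-plus-iteration argument, standard in the $\sigma$-uniformity literature. Fix $f \in L^2(\mathcal{I})$ with $\|f\|_2 = 1$ and set $M = |\mathcal{L}(f)|$; the goal is to show $M \le \max(A_\sigma, 2\sigma^{-1} K)$. The natural dichotomy is the following: either $f \in S_\sigma$, in which case the definition of $A_\sigma$ immediately gives $M \le A_\sigma$, or else $f$ is not $\sigma$-uniform and we may select some $u_0 \in \mathcal{U}(\mathcal{I})$ satisfying $|\langle f, u_0\rangle| > \sigma \|f\|_2$.

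In the non-uniform case, the next step is to orthogonally decompose $f = c_0 u_0 + f_1$, where $c_0 = \langle f, u_0\rangle/\|u_0\|_2^2$ and $f_1 \perp u_0$. The Pythagorean identity, combined with the selection of $u_0$ and the normalization implicit in $\mathcal{U}$, yields $\|f_1\|_2^2 \le (1-\sigma^2) \|f\|_2^2$. Sublinearity of $\mathcal{L}$ then gives
\begin{equation*}
|\mathcal{L}(f)| \le |c_0|\cdot|\mathcal{L}(u_0)| + |\mathcal{L}(f_1)|,
\end{equation*}
and I would control the first term by $K$ using Cauchy--Schwarz together with the definition of $K$: indeed $|c_0|\|u_0\|_2 = |\langle f, u_0\rangle|/\|u_0\|_2 \le \|f\|_2 = 1$, so $|c_0|\cdot|\mathcal{L}(u_0)|\le K$.

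The final step is to iterate. Applying the same dichotomy recursively to $f_1$ produces a sequence of residuals $\{f_n\}$ with $\|f_n\|_2 \le (1-\sigma^2)^{n/2}$, terminating either when some $f_N$ becomes $\sigma$-uniform (in which case $|\mathcal{L}(f_N)| \le A_\sigma\|f_N\|_2$) or in the limit $\|f_n\|_2\to 0$. Summing via sublinearity and the geometric decay of $\|f_n\|_2$ yields the recursion $M \le \max(A_\sigma,\; K + \sqrt{1-\sigma^2}\,M)$, which on rearranging delivers a bound of the advertised shape. The main obstacle I foresee is extracting the sharp linear dependence $2\sigma^{-1}$ from the geometric series, since a naive accounting produces only the weaker $(1-\sqrt{1-\sigma^2})^{-1} K \lesssim \sigma^{-2} K$; sharpening this to $\sigma^{-1}$ requires a more careful balancing in the choice of projection coefficient at each step (for instance, replacing the $L^2$-orthogonal projection by a scaled version that trades residual $L^2$-decay against the size of $c_n$).
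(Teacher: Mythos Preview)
The paper does not prove this lemma at all; it is simply quoted from \cite{Li}. So there is no ``paper's own proof'' to compare against, and the relevant question is whether your argument actually establishes the stated bound.

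Your dichotomy-and-iteration scheme is the right general idea, but as you yourself notice, it only yields $\|\mathcal{L}\|\lesssim \sigma^{-2}K$, not the $2\sigma^{-1}K$ claimed in the lemma. The fix you propose at the end---replacing the orthogonal projection by some ``scaled version'' so as to trade $L^2$-decay against coefficient size---does not work: whatever scalar $c_n$ you choose, the pair of quantities $|c_n|\|u_n\|_2$ and $\|f_n-c_nu_n\|_2$ are constrained by the parallelogram law, and optimizing over $c_n$ still leaves you with a geometric series summing to $(1-\sqrt{1-\sigma^2})^{-1}\approx 2\sigma^{-2}$. The loss is intrinsic to iterating on a \emph{generic} function.

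The missing idea is extremality. One should not iterate at all, but rather apply the dichotomy \emph{once} to a near-extremizer. Take $f$ with $\|f\|_2=1$ and $|\mathcal{L}(f)|>\|\mathcal{L}\|-\varepsilon$. If $f\in S_\sigma$ we are done; otherwise choose $u\in\mathcal{U}(\mathcal{I})$ with $|\langle f,u\rangle|>\sigma$, and set $g=f-cu$ with $c=\langle f,u\rangle/\|u\|_2^2$. Writing $s=|c|\|u\|_2=|\langle f,u\rangle|/\|u\|_2$, sublinearity and the definition of $\|\mathcal{L}\|$ give
\[
\|\mathcal{L}\|-\varepsilon \;\le\; |\mathcal{L}(f)| \;\le\; |c|\,|\mathcal{L}(u)| + |\mathcal{L}(g)| \;\le\; sK + \|\mathcal{L}\|\,\|g\|_2 \;=\; sK + \|\mathcal{L}\|\sqrt{1-s^2}.
\]
The crucial point is that the residual $g$ is bounded using $\|\mathcal{L}\|$ itself rather than fed back into another dichotomy. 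Letting $\varepsilon\to 0$ and rearranging,
\[
\|\mathcal{L}\| \;\le\; \frac{sK}{1-\sqrt{1-s^2}} \;=\; \frac{(1+\sqrt{1-s^2})K}{s} \;\le\; \frac{2K}{s}.
\]
Under the normalization on $\mathcal{U}(\mathcal{I})$ implicit in \cite{Li} (the functions there have $\|u\|_2\le 1$, as is also the case in the application in this paper), one has $s\ge\sigma$ and hence $\|\mathcal{L}\|\le 2\sigma^{-1}K$. This is where the linear dependence on $\sigma^{-1}$ comes from---it is a one-step argument exploiting that $f$ was chosen extremal, not a refinement of the iteration.
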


In the following we will apply the lemma to the functional
\begin{equation}\label{eqn:sigmaunifdefL}
\mathcal{L}(g) = \int_\R \int_{\R} {f}(x +\lambda t)g(2^{-m_0}x+\lambda Q(t)) h(x) \tau(t) dtdx.
\end{equation}
Our goal is to prove \eqref{dualized-bound-linf}.
The interval $\mathcal{I}$ will mean either $[1/2,2]$ or $[-2,-1/2]$.

We also define 
\begin{equation*}\mathcal{U}(\mathcal{I})=\{ \eta\mapsto a(\xi, \eta) e^{i\alpha \eta+i \lambda\Psi(\xi,\eta)}: \alpha\in \R, 2^{-10}\le |\xi|\le 2^{10}\},
\end{equation*}
where $\lambda,\Psi$ are as defined in \eqref{eqn:lambdaQdef}, \eqref{psi-function} and $a(\xi,\eta)$ is a compactly supported smooth function that is to be determined later.

First we assume that $\widehat{g}|_{\mathcal{I}}$ is $\sigma$-uniform in $\mathcal{U}(\mathcal{I})$. 
Localizing in the spatial variable $x$, we write $\mathcal{L}(g)$ as
\begin{equation*}
\sum_{0\leq \iota < 2^m} \int_{\R}\int_{\R} (\mathbbm{1}_{J_\iota} f)(x+ \lambda t) g(2^{-m_0}x+\lambda Q(t))\tau(t) (\mathbbm{1}_{I_\iota} h)(x)dt dx,
\end{equation*}
where $I_\iota=2^{m_0}[\iota,\iota+1]$ and \begin{equation*}
J_\iota=[\iota 2^{m_0}+\frac12\lambda, (\iota+1)2^{m_0}+ 2\lambda]
\end{equation*}
We introduce this cumbersome notation because we need to keep track of the spatial localization of $f$ for technical reasons that become clear at the end of the argument. Denote  $f_\iota= \mathbbm{1}_{J_\iota} f$  and  $h_\iota=\mathbbm{1}_{I_\iota} h$.
Passing to the Fourier side, we obtain
\begin{equation}\label{eqn:sigmaunif1tobound}
\sum_{0\leq \iota < 2^m} \iiint_{\R^3} \widehat{f_\iota}(\xi) e^{ix\xi} \widehat{g}(\eta) e^{i2^{-m_0}x\eta} \Big( \int_{\R} \tau(t) e^{i \lambda  \Phi_{\xi,\eta}(t)} dt\Big) h_\iota(x) d(x,\xi,\eta),
\end{equation}
where $\Phi_{\xi,\eta}$ is as defined in \eqref{eqn:PhiDef}.
Due to the localization in $x$ we can replace $e^{i2^{-m_0} x\eta}$ by the constant $e^{i2^{-m_0}\alpha_\iota \eta}$, where $\alpha_\iota$ is an arbitrary point chosen from $I_\lambda$. More precisely, we write
\begin{equation*} e^{i2^{-m_0} x\eta} = e^{i2^{-m_0}\alpha_\iota \eta} \sum_{s=0}^\infty \frac{i^s}{s!} (2^{-m_0}(x-\alpha_\iota))^s \eta^s. 
\end{equation*}
Plugging this into \eqref{eqn:sigmaunif1tobound} we then proceed to treat every term of the Taylor expansion separately. However, since the treatment is the same for each of them we will here only show the argument for the term $s=0$ for simplicity of notation. Thus we are left with bounding
\begin{equation*}
\sum_{0\leq \iota < 2^m} \iiint_{\R^3} \widehat{f_\iota}(\xi) e^{ix\xi} \widehat{g}(\eta) e^{i2^{-m_0}\alpha_\iota\eta} \Big( \int_{\R} \tau(t) e^{i \lambda  \Phi_{\xi,\eta}(t)} dt\Big) h_\iota(x) d(x,\xi ,\eta). 
\end{equation*}
Appealing again to the stationary phase principle in the form of \eqref{eqn:statphase} leaves us with having to estimate
\begin{equation*}
\begin{split}
2^{-\frac{m}2}\sum_{0\leq \iota < 2^m} \int_{\R} \Big(\int_{\R} \widehat{g}(\eta)a(\xi,\eta) e^{i2^{-m_0}\alpha\eta+i \lambda\Psi(\xi,\eta)} d\eta\Big ) \widehat{f_\iota}(\xi) \widehat{h_\iota}(-\xi) d\xi,
\end{split}
\end{equation*}
where $a(\xi,\eta)$ is a compactly supported smooth function. Recall that $a(\xi,\eta)=0$ if there is no critical point and the remainder term from Lemma \ref{lemma:stationaryphase} is treated as in \eqref{eqn:statphaseerr}. 
Now we  apply the definition of $\sigma$-uniformity and Cauchy-Schwarz to bound the last expression by
\begin{equation}\label{eqn:sigmaunifcalc1}
2^{-\frac{m}2}\sigma\sum_{0\leq \iota< 2^m} \|f_\iota\|_2 \|h_\iota\|_2 \|g\|_2.
\end{equation}
Another application of the Cauchy-Schwarz inequality yields
\begin{equation*}\sum_{0\leq \iota< 2^m} \|f_\iota\|_2 \|h_\iota\|_2 \le \left( \sum_{0\leq \iota< 2^m}\|f_\iota\|_2^2 \right)^{1/2} \left( \sum_{0\leq \iota< 2^m} \|h_\iota\|_2^2 \right)^{1/2}.
\end{equation*}
This is where we need to make use of the spatial localization of $f_\iota$ on the interval $J_\iota$. We have
\begin{equation*} \sum_{0\leq \iota< 2^m}\|f_\iota\|_2^2 = \int_\R \left(\sum_{0\leq \iota< 2^m} \mathbbm{1}_{J_\iota}(x) \right) |f(x)|^2 dx \lesssim \max(1,2^{m-(d-1)j-\ell}) \|f\|_2^2. 
\end{equation*}
Thus, \eqref{eqn:sigmaunifcalc1} is bounded by
\begin{equation}\label{sigma-uniform-1}
 2^{-\frac{m}2} \sigma \max(1,2^{(m-(d-1)j-\ell)/2}) \|f\|_2 \|g\|_2 \|h\|_2\lesssim_{\ell,M} \sigma 2^{\frac{m_0}2} 2^{\frac{\kappa}2 m} \|f\|_2 \|g\|_2 \|h\|_\infty,
 \end{equation}
where in the last estimate we have used that $|m_0|>(1-\kappa)m$ and that $h$ is supported on $[0,2^{m+m_0}]$. This finishes the estimate for the case when $\widehat{g}|_{\mathcal{I}}$ is $\sigma$-uniform in $\mathcal{U}({\mathcal{I}})$. \\

It remains to treat the case when $\widehat{g}|_{\mathcal{I}}\in \mathcal{U}(\mathcal{I})$. We substitute $x\to 2^{m+m_0}x-\lambda t$ in in \eqref{eqn:sigmaunifdefL} and arrive at 
\begin{equation*}
2^{m+m_0} \int_{\R} f(2^{m+m_0} x)\int_{\R} g(2^{m} x-2^{-m_0}\lambda t+\lambda Q(t)) h(2^{m+m_0}x-\lambda t)\tau(t)dtdx
\end{equation*}
Applying H\"older's inequality  we bound this by $\|f\|_{2} \|T(g,h)\|_{2}$, where 
\begin{equation*}
\begin{split}
T(g,h)(x)&=2^{\frac{m+m_0}{2}} \int_{\R} g(2^{m} x-2^{-m_0}\lambda t+\lambda Q(t)) h(2^{m+m_0}x-\lambda t)\tau(t)dt.
\end{split}
\end{equation*}
Using the Fourier inversion formula we see that this is equal to\footnote{Up to a universal constant.}
\begin{equation*}
2^{\frac{m+m_0}{2}} \int_{\R}\int_{\R} \widehat{g}(\eta) e^{i\eta \lambda( 2^m\lambda^{-1} x-2^{-m_0}t+Q(t))} h(2^{m+m_0}x-\lambda t)\tau(t)d\xi dt.
\end{equation*}
Using our assumption on $\widehat{g}$ this becomes
\begin{equation*} 
2^{\frac{m+m_0}{2}} \int_{\R}\left(\int_{\R} a(\xi, \eta) e^{i\lambda (\eta z_{x,t}+\Psi(\xi,\eta))} d\eta \right) h(2^{m+m_0}x-\lambda t)\tau(t) dt.
\end{equation*}
Here we have set
\begin{equation}\label{eqn:zxtdef}
z_{x,t}=\alpha+2^{m}\lambda^{-1} x-2^{-m_0}t+Q(t),
\end{equation}
where $\alpha\in\R$ is arbitrary and $\xi$ is a parameter comparable to one whose precise value is irrelevant. We would like to apply the stationary phase principle. The phase function is
\begin{equation*}
\tilde{\Phi}_{x,t,\xi}(\eta)=\eta z_{x,t} + \Psi(\xi,\eta).
\end{equation*}
Similarly as above, we may assume that the equation 
\begin{equation*} 
z_{x,t} + \partial_\eta \Psi(\xi,\eta_c)=0.
\end{equation*}
has a unique solution $\eta_c=\eta_c(x,t,\xi)\in\mathcal{I}$. Recall that $\partial_\eta \Psi(\xi,\eta) = Q(t_c(\xi,\eta))$.
Let us write $t_c=t_c(\xi,\eta_c(x,t,\xi))$ in the following.
Set
\begin{equation*} \tilde{\Psi}_\xi(x,t) = \tilde{\Phi}_{x,t,\xi}(\eta_c) = \eta_c z_{x,t} + \Psi(\xi,\eta_c) 
\end{equation*}
Since
\begin{equation*} \Psi(\xi,\eta_c) = t_c \xi + \eta_c Q(t_c) 
\end{equation*}
we have
\begin{equation}\label{eqn:tildepsi}
\tilde{\Psi}_\xi(x,t) = t_c \xi = \xi Q^{-1}(-z_{x,t}).
\end{equation}
Calculate
\begin{equation*} |\partial_\eta^2 \Psi(\xi,\eta)| = \Big|\frac{(Q')^2}{\eta Q''}\Big| \approx_d 1. 
\end{equation*}
Thus, by stationary phase the major contribution to $T(g,h)(x)$ is
\begin{equation*}
2^{\frac{m_0}2} \int_{\R} \widetilde{a}_\xi(x,t) e^{i\lambda \tilde{\Psi}_\xi(x,t)} h(2^{m+m_0}x-\lambda t)\tau(t) dt,
\end{equation*}
for some compactly supported smooth function $\widetilde{a}_{\xi}$. Expanding the square of the $L^2$ norm of this expression and changing variables gives
\begin{equation*}
\begin{split}
2^{m_0} \int_{\R} \int_{\R} & \left(\int_{\R}  e^{i\lambda(\tilde{\Psi}_\xi(x,t)-\tilde{\Psi}_\xi(x,t+s))}\widetilde{a}_\xi(x,t)\overline{\widetilde{a}_\xi(x,t+s)}  \right.
 \\
& \times h(2^{m+m_0}x-\lambda t)\overline{h(2^{m+m_0}x-\lambda (t+s))}\tau(t)\overline{\tau(t+s)} dt \Big) dx
 ds.
\end{split}
\end{equation*}
The plan is to integrate by parts in the $t$ variable. Set
\begin{equation*} H(x) = h(2^{m+m_0}x)\overline{h}(2^{m+m_0}x-\lambda s),\quad \Theta(t) = \tau(t)\overline{\tau}(t+s) 
\end{equation*}
and $\chi_{\xi}(x,t)=\widetilde{a}_\xi(x,t)\overline{\widetilde{a}_\xi(x,t+s)}$.
Then our quantity equals
\begin{equation*}
2^{m_0}\int_\R\int_\R \Big(\int_\R e^{i\lambda(\tilde{\Psi}_\xi(x,t)-\tilde{\Psi}_\xi(x,t+s))} \Theta(t) H(x-2^{-m-m_0}\lambda t) dt\Big) \chi_{\xi}(x,t) dx ds.
\end{equation*}
Changing variables $x\mapsto x+2^{-m-m_0}\lambda t$ we get
\begin{equation}\label{0228e3.78}
\begin{split}
2^{m_0}\int_\R\int_\R \Big( \int_\R & e^{i\lambda (\tilde{\Psi}_\xi(x+2^{-m-m_0}\lambda t,t)-\tilde{\Psi}_\xi(x+2^{-m-m_0}\lambda t,t+s))} \Theta(t) b dt\Big) H(x) \\
& \times \chi_{\xi}(x+2^{-m-m_0}\lambda t,t) dx ds.
\end{split}
\end{equation}
From \eqref{eqn:zxtdef} and \eqref{eqn:tildepsi} we see that
\begin{equation*}
\tilde{\Psi}_\xi(x+2^{-m-m_0}\lambda t,t)-\tilde{\Psi}_\xi(x+2^{-m-m_0}\lambda t,t+s)
\end{equation*}
is equal to
\begin{equation}\label{0228e3.80}
\xi\cdot \left( Q^{-1} (- 
2^m\lambda^{-1} x - Q(t)) - Q^{-1} (- 2^m\lambda^{-1} x - 2^{-m_0}s - Q(t+s)) \right)
\end{equation}
Set 
\begin{equation*} \vartheta(x,t) =  Q^{-1} (- 
2^m\lambda^{-1} x - Q(t)) 
\end{equation*}
Writing $\vartheta=\vartheta(x,t)$ we have
\begin{equation*} \partial^2_t \vartheta(x,t) = - \frac{Q'(t)^2}{Q'(\vartheta)}\Big(\frac{Q''(t)}{Q'(t)^2}+\frac{Q''(\vartheta)}{Q'(\vartheta)^2 }\Big) 
\end{equation*}
We claim that 
\begin{equation}\label{0228e3.83}
|\partial^2_t \vartheta(x,t)| \gtrsim_{\ell} |x|.
\end{equation}
At this point we may assume without loss of generality that $Q$ is an odd polynomial. This is justified since it suffices to handle the cases that $Q$ is even or odd and the even case follows in the same way.
With that assumption we have
\begin{equation*}
| \partial^2_t \vartheta(x,t)| \approx \Big|\frac{Q''(t)}{Q'(t)^2}-\frac{Q''(Q^{-1}(Q(t)+\tilde{x}))}{Q'(Q^{-1}(Q(t)+\tilde{x}))^2 }\Big| \text{ with } \tilde{x}=2^m \lambda^{-1}x.
 \end{equation*}
Now \eqref{0228e3.83} follows immediately from the mean value theorem.

 By applying the mean value theorem again, together with the fact that $2^{-m_0} |s|$ is much smaller compared to $|s|$, we obtain that the $t$ derivative of \eqref{0228e3.80} is bounded from below by $|xs|$. Hence, by using the triangle inequality on small subsets around the origin in the variables $x$ and $s$ and integration by parts on the complement we obtain that
\begin{equation}\label{sigma-uniform-2}
|\eqref{0228e3.78} |\lesssim_\ell 2^{m_0} 2^{-\beta m} \|h\|^2_{\infty}
\end{equation}
for any $0<\beta<1$. Here we have used that both $x$ and $s$ take values in intervals of lengths which can be bounded by constants depending only on $d$ and $\ell$. 

Combining estimates \eqref{sigma-uniform-1} and \eqref{sigma-uniform-2} we obtain from Lemma \ref{lemma:sigmaunif} that 
\begin{equation*} 
|\mathcal{L}(g)|\lesssim_{\ell,M} 2^{\frac{m_0}2}\max\{\sigma 2^{\frac{\kappa}{2} m}, \sigma^{-1} 2^{-\frac{\beta}2 m}\} \|f\|_2 \|g\|_2 \|h\|_\infty.
\end{equation*}
Choosing $\kappa$ small enough, $\sigma=2^{-\kappa m}$ and $\beta=\frac12$, we can bound this by
\begin{equation*} \lesssim 2^{\frac{m_0}2-\frac{\kappa}2 m} \|f\|_2 \|g\|_2 \|h\|_\infty. 
\end{equation*}
Thus we have finished the proof of \eqref{dualized-bound-linf}.

\subsection{Case III: $d_0=1$.} By construction of the admissible sets $\mathcal{E}$ and $\Lambda$, there exists $d_1\not=1$ such that $-\ell\in \mathcal{J}^{(j)}_{1,d_1}$. That is, the linear term dominates at the dyadic scale $-\ell$ and the $d_1$th power monomial dominates the remaining, nonlinear monomials at that scale. We have
\begin{equation}\label{eqn:d1dominating}
 |a_{d_1} (2^{j-\ell})^{d_1}|> \Gamma_0|a_r (2^{j-\ell})^r| \text{ for every } r\not\in\{1, d_1\}.
\end{equation} 
Note that $m_0 = b_1$ and let us write
\begin{equation*}
Q(t)=2^{-b_1} a_1 t+R(t).\end{equation*}
For convenience let us assume that $a_1=2^{b_1}$. This does not affect the argument we give, but simplifies notation.
From \eqref{eqn:d1dominating} and \eqref{eqn:lambdaQdef} we see that $\|R\|_{C^d([1/2,2])} \approx 2^{q_0}$, where
\begin{equation}\label{eqn:q0est}
0>q_0 := b_{d_1} + (d_1-1)(j-\ell) - b_1>(d-1)(j-\ell)-b_1.
\end{equation}
Also, since the linear term is dominating,
\[
(j-\ell)(d-1)< b_1 \le \log(M).
\]
Since $j\ge 0$ we therefore have that $|b_1|$ and $|q_0|$ are comparable to one up to constants depending on $d,\ell, M$. 
Applying the stationary phase principle in the form of Lemma \ref{lemma:stationaryphase} and discarding the remainder term based on the same argument that led to \eqref{eqn:statphaseerr}, it now remains to prove
\begin{equation}\label{181118e4.36}
\left|\int_{\R}\int_{\R} \widehat{f}(\xi) \widehat{g}(\eta) e^{i \lambda\Psi(\xi, \eta)} a(\xi,\eta) \widehat{h}(-\xi-2^{-b_1}\eta) d\xi d\eta\right|  \lesssim 2^{-\gamma m}\|f\|_2 \|g\|_2 \|h\|_2
\end{equation}
for some positive $\gamma>0$ and $\widehat{f},\widehat{g}$ supported in the annulus $1\le |\xi|\le 2$. Here $a$ is a smooth and compactly supported cutoff function and we keep in mind that in the support of $a$ we have $|\xi+\eta| \approx 2^{q_0},$ since this is a necessary condition for the existence of a stationary point.
\begin{claim}\label{claim4.5}
There exist $\gamma>0$ and an integer $C_d$ depending only on the degree $d$ and intervals $W_1, \dots, W_{C_d}\subset \R$ of length at most $2^{-\gamma m}$, such that whenever $\xi/\eta\not\in W_{\iota}$ for all $\iota$, we have 
\begin{equation}\label{181118e4.37}
|\partial_{\xi}\partial_{\eta}(2^{-b_1}\partial_{\xi}\Psi-\partial_{\eta}\Psi)|\gtrsim 2^{-\gamma m}.
\end{equation}
Implicit constants depend on $d,\ell$ and $M$. 
\end{claim}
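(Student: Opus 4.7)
The strategy is to observe that $F := 2^{-b_1}\partial_\xi\Psi - \partial_\eta\Psi$ depends on $(\xi,\eta)$ only through the single ratio $s := -\xi/\eta$, so that $\partial_\xi\partial_\eta F$ reduces to the derivative of a one-variable function. The claim will then follow from the classical sublevel set inequality for real polynomials of bounded degree.

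Recall that $\partial_\xi\Psi = t_c$ and $\partial_\eta\Psi = Q(t_c) = t_c + R(t_c)$, where $t_c$ is determined by $Q'(t_c) = -\xi/\eta$, i.e.\ by $R'(t_c) = s-1$. Thus the critical point $u := t_c$ depends on $(\xi,\eta)$ only through $s$, and
\begin{equation*}
F = (2^{-b_1}-1)\,u(s) - R(u(s))
\end{equation*}
is a function of $s$ alone. Using $\partial_\xi s = -1/\eta$ and $\partial_\eta s = -s/\eta$, the chain rule yields $\partial_\xi\partial_\eta F = \eta^{-2}(sF'(s))'$. A direct computation using $du/ds = 1/R''(u)$ and $s=1+R'(u)$ gives
\begin{equation*}
\partial_\xi\partial_\eta F \;=\; \frac{\Theta(u)}{\eta^{2}\,R''(u)^{3}},
\end{equation*}
where
\begin{equation*}
\Theta(u) := (2^{-b_1}-2-2R'(u))\,R''(u)^{2} \;-\; (1+R'(u))(2^{-b_1}-1-R'(u))\,R'''(u)
\end{equation*}
is a polynomial in $u$ of degree at most $3d-5$.

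Next, we apply the standard sublevel set estimate: for a polynomial $p$ of degree $n$ on $[1/2,2]$, the set $\{u\in[1/2,2]:|p(u)|<\varepsilon\}$ is contained in at most $n$ intervals of total length $\lesssim_{n}(\varepsilon/\|p\|_{L^{\infty}([1/2,2])})^{1/n}$. Applied to $p=\Theta$, and using that $|R''(u)|\approx 2^{q_0}$ on $[1/2,2]$ together with the fact that $u\mapsto s=1+R'(u)$ is bi-Lipschitz on $[1/2,2]$ with constants depending on $d,\ell,M$, the exceptional sublevel set of $\Theta(u)/R''(u)^{3}$ transfers to at most $3d$ intervals $W_{1},\dots,W_{C_{d}}$ in the ratio $\xi/\eta=-s$, each of length $\le 2^{-\gamma m}$, outside which $|\partial_\xi\partial_\eta F|\gtrsim 2^{-\gamma m}$ holds for a suitable $\gamma=\gamma(d,\ell,M)>0$.

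The main obstacle is the quantitative lower bound on $\|\Theta\|_{L^{\infty}([1/2,2])}$. Organizing $\Theta$ by powers of $R$ isolates the linear-in-$R$ contribution $-(2^{-b_1}-1)\,R'''(u)$, of magnitude $\gtrsim|2^{-b_1}-1|\cdot 2^{q_{0}}$ when $b_1\neq 0$ (note $|2^{-b_1}-1|\ge 1/2$ for nonzero integer $b_1$). In the degenerate case $b_1=0$ this linear term vanishes and one passes to the quadratic-in-$R$ term $-(R''(u)^{2}-R'(u)R'''(u))$; its nondegeneracy and magnitude $\gtrsim 2^{2q_{0}}$ follow from the existence of the distinguished dominant monomial $c_{d_{1}}u^{d_{1}}$ in $R$ with $|c_{d_{1}}|\approx 2^{q_{0}}$, guaranteed by $-\ell\in \mathcal{J}_{1,d_{1}}^{(j)}$. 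A short case analysis of this kind completes the argument.
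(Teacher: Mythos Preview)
Your reduction is exactly the paper's: both compute $\partial_\xi\partial_\eta F = \Theta(u)/(\eta^2 R''(u)^3)$ with the very same polynomial $\Theta$ (expanding the paper's expression $-2(R'+1-2^{-b_1-1})(R'')^2+(R'+1)(R'+1-2^{-b_1})R'''$ gives your $\Theta$ verbatim), and then appeal to a polynomial sublevel set estimate in $u=t_c$, transferring to intervals in $\xi/\eta$ via the bi-Lipschitz change $s=1+R'(u)$.

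The only real divergence is in how the nondegeneracy of $\Theta$ is established, and here your sketch has a gap. You organise $\Theta$ by ``orders in $R$'' and propose that the linear-in-$R$ piece $-(2^{-b_1}-1)R'''$ (or, when $b_1=0$, the quadratic piece) controls $\|\Theta\|_{L^\infty}$. But in Case~III the paper has already recorded that $|q_0|\lesssim_{d,\ell,M}1$, so $\|R\|_{C^d([1/2,2])}\approx 2^{q_0}\approx_{d,\ell,M}1$; consequently the linear-, quadratic-, and cubic-in-$R$ contributions to $\Theta$ are all of \emph{comparable} size, with no smallness parameter separating them. There is then no reason the lowest-order term should survive cancellation, and the promised ``short case analysis'' does not close. (Concretely: the inequality $|2^{-b_1}-1|\cdot 2^{q_0}\gg 2^{2q_0}$ that would make the linear term dominate is unavailable, since both sides are $\approx_{d,\ell,M}1$.)

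The paper bypasses this entirely by looking at the top rather than the bottom: as a polynomial of degree $3d-5$ in $t_c$, the leading term of $\Theta$ comes solely from $-2R'(R'')^2+(R')^2R'''$ and equals $-d^4(d-1)c_d^3\,t_c^{3d-5}$, where $c_d$ is the leading coefficient of $R$. Since $j\ge 0$ forces $|c_d|\gtrsim_{d,\ell,M}1$, the leading coefficient is bounded away from zero while all coefficients are bounded; the standard sublevel inequality then gives the claim directly. Replacing your last paragraph with this one-line observation makes your argument complete and identical to the paper's.
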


This claim will be proven at the end of this section. A condition of the form \eqref{181118e4.37} first appeared in the work of Li \cite{Li}, see also Xiao \cite{Xi17} and Gressman and Xiao \cite{GX16}.  Let $\widetilde{a}: \R\to \R$ be a smooth bump function that is equal to one on each enlarged interval $2 W_{\iota}$ such that $\|\widetilde{a}\|_{C^4}\lesssim 2^{4\gamma m}$. We bound the left hand side of \eqref{181118e4.36} by the sum of 
\begin{equation}\label{181118e4.38}
\int_{\R}\int_{\R} |\widehat{f}(\xi) \widehat{g}(\eta) a(\xi,\eta)\widetilde{a}(\xi/\eta) \widehat{h}(-\xi-2^{-b_1}\eta)| d\xi d\eta
\end{equation}
and 
\begin{equation}\label{181118e4.39}
\left|\int_{\R}\int_{\R} \widehat{f}(\xi) \widehat{g}(\eta) e^{i \lambda\Psi(\xi, \eta)} a(\xi,\eta)(1-\widetilde{a}(\xi/\eta)) \widehat{h}(-\xi-2^{-b_1}\eta) d\xi d\eta\right|.
\end{equation}
By the Cauchy-Schwarz inequality we obtain 
\[\eqref{181118e4.38}\lesssim_{l, M} 2^{-\frac{\gamma m}{2}} \|f\|_2\|g\|_2\|h\|_2.
\]

It remains to control \eqref{181118e4.39}. Here we follow a similar argument as in Case I.
Write $b(\xi,\eta)=a(\xi,\eta)(1-\widetilde{a}(\xi/\eta))$. Then $\|b\|_{C^4}\lesssim 2^{4\gamma m}$. After applying a change of variables and the Cauchy-Schwarz inequality, we conclude that it is enough to prove 
\begin{equation}\label{181118e4.40}
\Big\|\int_{\R} \widehat{f}(\xi-2^{-b_1}\eta) \widehat{g}(\eta)b(\xi-2^{-b_1}\eta, \eta) e^{i\lambda \Psi(\xi-2^{-b_1}\eta, \eta)} d\eta \Big\|^2_{L^2_{\xi}} \lesssim 2^{-\gamma m} \|f\|_2^2\|g\|_2^2.
\end{equation}
By the triangle inequality, it suffices to prove \eqref{181118e4.40} with a better gain $2^{-6\gamma m}$ in place of $2^{-\gamma m}$ for every function $g$ with $\widehat{g}$ supported on an interval of length $2^{-2\gamma m}$. We expand the square on the left hand side of \eqref{181118e4.40}. After a change of variable, we obtain 
\begin{equation}\label{181118e4.41}
\int_{|\alpha|\le 2^{-2\gamma m}} \iint_{\R^2} e^{i\lambda [\Psi(\xi, \eta)-\Psi(\xi+2^{-b_1}\alpha, \eta-\alpha)]} F_{\alpha}(\xi)G_{\alpha}(\eta) a_{\alpha}(\xi, \eta)d\xi d\eta d\alpha,
\end{equation}
where $F_{\alpha}, G_{\alpha}$ are as in \eqref{eqn:TTstarnotation}, and $a_{\alpha}$ is some new compactly supported amplitude. By the mean value theorem we see that
\begin{equation}
\Big|\partial_{\xi}\partial_{\eta} \Big(\Psi(\xi, \eta)-\Psi(\xi+2^{-b_1}\alpha, \eta-\alpha) \Big) \Big| \gtrsim 2^{-\gamma' m}|\alpha|. 
\end{equation}
Now we are ready to apply Lemma \ref{lemma_hormander} to bound \eqref{181118e4.41} in the same way as we did in \eqref{mainest:alphabig}. This concludes the proof of \eqref{181118e4.40}.

\begin{proof}[Proof of Claim \ref{claim4.5}.]
Recall that $t_c(\xi, \eta)$ is defined via 
\begin{equation}
\xi+ \eta +\eta R'(t_c)=0.
\end{equation}
We write $\rho = R'(t_c) = - \tfrac{\xi+\eta}{\eta}$. Recall that
\begin{equation}
\Psi(\xi, \eta)= t_c\cdot (\xi+\eta)+ \eta R(t_c).
\end{equation}
Direct computation shows that the expression $2^{-b_1}\cdot\partial_\xi^2 \partial_\eta \Psi(\xi,\eta) - \partial_{\xi}\partial_\eta^2 \Psi(\xi,\eta)$ equals
\[ \frac{\eta (2\xi+2^{-b_1}\eta) (R''(t_c))^2 + \xi (\xi+2^{-b_1}\eta) R'''(t_c)}{\eta^4 R''(t_c)^3}. \]
We have
\[ \eta(2\xi+2^{-b_1}\eta) = -2\eta^2 (\rho + (1-\tfrac12 2^{-b_1})),\;\text{and} \]
\[ \xi(\xi+B\eta) = \eta^2 (\rho+1)(\rho+(1-2^{-b_1})). \]
Therefore, the left hand side of the claimed inequality is comparable (up to constants depending on $d,\ell,M$) to the absolute value of
\begin{equation}\label{eqn:lineartermpfphase1}
-2(\rho+(1-2^{-b_1-1}))(R''(t_c))^2 + (\rho+1)(\rho+1-2^{-b_1}) R'''(t_c),
\end{equation}
which equals
\[
-2(R'(t_c)+(1-2^{-b_1-1}))(R''(t_c))^2 + (R'(t_c)+1)(R'(t_c)+1-2^{-b_1}) R'''(t_c),
\]
As a function of $t_c$, this is a polynomial of degree $3d-5$ with bounded coefficients (by a constant depending only on $d,\ell,M$) and leading coefficient away from zero. This implies the claim.
\end{proof}

\end{document}